\theoremstyle{plain}
\newtheorem{theorem}{Theorem}[section]
\newtheorem{corollary}[theorem]{Corollary}
\newtheorem{lemma}[theorem]{Lemma}
\theoremstyle{definition}
\numberwithin{equation}{section}
\DeclareMathOperator{\dist}{dist}
\DeclareMathOperator{\supp}{supp}
\DeclareMathOperator{\sym}{sym}
\DeclareMathOperator{\proj}{\mathbb{P}_{SO(3)}}
\newcommand{\abs}[1]{\left\lvert #1 \right\rvert}
\newcommand{\norm}[1]{\left\lVert #1 \right\rVert}
\newcommand{\inn}[2]{\langle #1\,,\,#2 \rangle}
\newcommand{\pare}[1]{\left(#1 \right)}
\newcommand{\bigo}[1]{\mathcal{O}(#1)}
\newcommand{\smo}[1]{o(#1)}
\newcommand{\car}[1]{\mathds{1}_{#1}}
\newcommand{\Vh}{\mathcal{V}^h}
\newcommand{\Uh}{\mathcal{U}^h}
\newcommand{\nabu}{\nabla{u^h}}
\newcommand{\Dhx}{D_{x',h}}
\newcommand{\Bhx}{B_{x',h}}
\newcommand{\Bhz}{B_{z',h}}
\newcommand{\Rhx}{R_{x',2h}}
\newcommand{\Ex}{\eta_{x'}}
\newcommand{\Exz}{\eta_{x'}(z')}
\newcommand{\Eyz}{\eta_{z'}(y')}
\newcommand{\dx}{\,\mathrm{d}x}
\newcommand{\dy}{\,\mathrm{d}y}
\newcommand{\dz}{\,\mathrm{d}z}
\newcommand{\dxt}{\,\mathrm{d}x_3}
\newcommand{\Id}{\mbox{Id}_3}
\renewcommand{\b}{\vec{b}_0}
\renewcommand{\d}{\vec{d}_0}
\renewcommand{\k}{\vec{k}_0}
\newcommand{\p}{\vec{p}}
\newcommand{\ph}{\vec{p}^h}
\newcommand{\qh}{\vec{q}^h}
\newcommand{\rh}{\vec{r}^h}
\begin{document}
\title[incompatible prestrain of higher order]
{Plates with incompatible prestrain of higher order}
\author{Marta Lewicka}
\address{Marta Lewicka,  University of Pittsburgh, Department of Mathematics, 
301 Thackeray Hall, Pittsburgh, PA 15260, USA }
\email{lewicka@pitt.edu}
\author{Annie Raoult}
\address{Annie Raoult, Laboratoire MAP5, 
  Universit\'e Paris Descartes \& CNRS, Sorbonne Paris Cit\'e, France}
\email{annie.raoult@parisdescartes.fr} 
\author{Diego Ricciotti}
\address{Diego Ricciotti, University of Pittsburgh, Department of Mathematics, 
301 Thackeray Hall, Pittsburgh, PA 15260, USA}
\email{DIR17@pitt.edu}

\date{23 March, 2015}

\begin{abstract}
We study the effective elastic behaviour of the incompatibly prestrained
thin plates, characterized by a Riemann metric $G$ on the
reference configuration. We assume that the prestrain is ``weak'', i.e. it induces scaling
of the incompatible elastic energy $E^h$ of order less than $h^2$ in terms
of the plate's thickness $h$. 

We essentially prove two results. First, we establish the
$\Gamma$-limit of the scaled energies $h^{-4}E^h$ and show that it
consists of a von K\'arm\'an-like energy, given in terms of the first
order infinitesimal isometries and of the admissible
strains on the surface isometrically immersing $G_{2\times 2}$ (i.e. the prestrain metric on the
midplate) in $\mathbb{R}^3$. Second, we prove that in the scaling regime $E^h\sim h^\beta$ with
$\beta>2$, there is no other limiting theory: if $\inf h^{-2} E^h \to
0$ then $\inf E^h\leq Ch^4$, and if $\inf h^{-4}E^h\to 0$ then $G$ is
realizable and hence $\min E^h = 0$ for every $h$. 
\end{abstract}

\maketitle
\tableofcontents

\section{Introduction}

The purpose of this paper is to study effective elastic behaviour
of the incompatibly pre-stressed thin plates $\Omega^h$, characterized by a
Riemann metric $G$ given on their reference configuration. The incompatibility is measured
through the energy $E^h$ given below (sometimes called the ``non-Euclidean''
elastic energy).

We will be concerned with the regime of
curvatures of $G$ which yields the incompatibility rate of order
higher than $h^2$, in plate's thickness $h$. Indeed, in paper \cite{BLS} we
analyzed the scaling $\inf E^h\sim h^2$ and proved that it only occurs
when the metric $G_{2\times 2}$ on the mid-plate  can be
isometrically immersed in $\mathbb{R}^3$ with the regularity $W^{2,2}$
and when, at the same time, the three appropriate Riemann
curvatures of $G$ do not vanish identically (for details, see
below). The relevant residual theory, obtained through 
$\Gamma$-convergence, yielded a  bending Kirchhoff-like residual energy.

In the present paper we assume that:
\begin{equation}\label{scale}
h^{-2}\inf E^h \to 0
\end{equation} 
and prove that the only nontrivial residual theory in this regime is
a von K\'arm\'an-like energy, valid when $\inf E^h\sim h^4$. It further
turns out that this scaling is automatically implied by
(\ref{scale}) and $\inf E^h\neq 0$. Indeed, we show that if
(\ref{scale}) then $h^{-4}\inf E^h\leq C$, and that $h^{-4}\inf
E^h\to 0$ if and only if $G$ is immersible whereas trivially $\min E^h = 0$ for all $h$. 

This scale separation is contrary to the findings of
\cite{FJMhier} valid in the Euclidean case of $G=\mbox{Id}_3$, where
the residual energies are driven by 
presence of applied forces $f^h\sim h^\alpha$. In that context, three
distinct limiting theories have been obtained for $E^h\sim h^\beta$
with $\beta>2$ (equivalently $\alpha>2$). Namely: $\beta\in (2,4)$ corresponded to the
linearized Kirchhoff (nonlinear bending)  model subject to a nonlinear
constraint on the displacements, $\beta=4$ to the classical  von K\'arm\'an 
model, and $\beta>4$ to the linear elasticity. The present results are also contrary to
the higher order hierarchy of scalings and of the resulting elastic
theories of shells, as derived by an asymptotic calculus in
\cite{LPhier}. The difference is due to the fact that while the
magnitude of external forces is adjustable at will, it seems not to be
the case for the interior mechanism of a given metric
$G$ which does not depend on $h$. In fact, it is the curvature tensor of $G$ 
that induces the nontrivial stresses in the thin film. The Riemann
tensor of a three-dimensional metric has only
six independent components, namely the six sectional
curvatures created out of the three principal directions, which further
fall into two categories: including or excluding the thin direction
variable. The simultaneous vanishing of curvatures in each of such categories
correspond to the two scenarios at hand in terms of the scaling of the
residual energy. 


\subsection{Some background in dimension reduction for thin structures}
Early attempts for replacing the three-dimensional model of a thin
elastic structure with planar mid-surface at rest,  by a
two-dimensional model, were based on {\it a priori} simplifying assumptions
on the deformations and on the stresses.  Later, the natural idea of
using the thickness as a small parameter and of establishing a limit
model was largely explored; we refer in particular to the works by 
Ciarlet and Destuynder who set the method in the appropriate
framework of the weak formulation of the boundary value problems
\cite{CD-lin, CD-nonlin},  proved convergence to the linear plate
model \cite{Destuynder-conv}  in the context of linearized elasticity,
and obtained formally the von 
K\'arm\'an plate model from finite elasticity \cite{C-vK}. See also
\cite{Raoult-annali, Raoult-etat} for  the time-dependent case and
convergence results, and \cite{Cbook-plates} for a comprehensive list
of references.  

The issue of deriving two-dimensional models valid for large
deformations, by means of an asymptotic formalism, was subsequently tackled by
Fox, Simo and the second author in \cite{FRS}. They showed, in the
context of the Saint Venant-Kirchhoff materials subject to appropriate
boundary conditions, how to recover a hierarchy of four
models. This hierarchy, driven by the order of magnitude of the
applied loads, consisted of: the nonlinear membrane model, the
inextensional bending model, the von K\'arm\'an model and the linear
plate model. The models thus obtained 
still required a justification through rigorous convergence results.  In \cite{LR1}, Le
Dret and the second author used  the variational point of view and
proved  $\Gamma$-convergence  of the three-dimensional elastic energies to a nonlinear
membrane energy, valid for loads of magnitude of order $1$. We remark
that the expression of the
limiting stored energy therein consisted of quasiconvexification  of the three-dimensional
energy, first minimized with respect to the normal stretches. This
allowed to recover the degeneracy under compression; a feature that is otherwise
missed by formal expansions. We further mention that for $3$d$\to 1$d reduction a
similar point of view had been introduced by Acerbi, Buttazzo and
Percivale in \cite{ABP}.  

A key-point for deriving rigorously the above mentioned 
nonlinear bending model  has been the geometric
rigidity result due to  Friesecke, James and M\"uller \cite{FJMgeo}. 
In the similar spirit, the same authors justified the von
K\'arm\'an model, the linear model  \cite{FJMhier} and also they introduced
novel intermediate models, in particular in the range of energies -- or
equivalently of loadings -- between the scaling responsible for
bending ($\beta =2$)
and the von K\'arm\'an scaling ($\beta =4$).  In this range of
models, the three-dimensional stored energy appears in the limit stored
energy  through its second derivative at rest. Scaling the energy with
exponents $\beta$ other than integers had been explored for the
membrane to bending  range  in \cite{ConMag} leading to
convergence results for $0 < \beta < 5/3$  while the regime $5/3 \leq
\beta < 2$ remains open and is 
conjectured to be relevant for crumpling of elastic
sheets. Other significant extensions concern derivation of limit theories
for incompressible materials \cite{3a, 4a, 28a, M4}, for heterogeneous
materials \cite{S}, through establishing convergence of
equilibria rather than strict minimizers \cite{20a, 22a, M1, M3, M2}, 
and finally for shallow shells \cite{lemapa2}. 

Extension of the above variational method valid in the framework of
the large deformation model  was conducted in parallel for slender
structures whose midsurface at rest is non-planar. 
The first result by the second author and Le Dret  \cite{LR2} relates to scaling $\beta = 0$ and models membrane
shells: the limit  stored energy depends then only on the stretching and shearing produced by the
deformation on the midsurface. Another study is due to Friesecke, James, Mora
and M\"uller \cite{FJMM_cr} who analyzed the case $\beta  = 2$. This scaling corresponds to a flexural
shell model, where the only admissible deformations are those
preserving the midsurface  metric. The  limit  energy
depends then on the change of curvature produced 
by the  deformation. Further, the first author, Mora and Pakzad derived the relevant linear
theories ($\beta>4$) and the von K\'arm\'an-like theories ($\beta=4$) in  
\cite{lemopa1}, and subsequently proceeded to finalize the analysis for 
elliptic shells in the full regime $\beta>2$ in \cite{lemopa3}. A similar analysis
has been performed in case of the developable shells in \cite{holepa}
leading to the proof of the collapse of all residual theories to the
linear theory when $\beta>2$. Following these findings,
a conjecture was made in \cite{LPhier} about the infinite
hierarchy of shell models and the various possible limiting scenarios
differentiatied by rigidity properties of shells.   Let us recall that
a comprehensive body of work had been previously devoted to the
asymptotic derivation of shell models  in the small displacement
regime under clear hypotheses on the model taken for granted,
three-dimensional or already two-dimensional and containing the thickness as
a parameter. Several models were recovered by Ciarlet and coauthors
\cite{CL1, CLg, CLM}, by Destuynder \cite{Destuynder-etat,
  Destuynder-class} 
and by Sanchez-Palencia  and coauthors \cite{Sanchez-coq1,
  Sanchez-coq2, CS-anis, CS-stiff, Miara-Sanchez}. Sanchez-Palencia, in 
particular, theorized the role and interplay of the midsurface
geometry and of the boundary conditions \cite{Sanchez3d}, as well as
underlined the singular perturbation behavior. We refer to \cite{Cbook-shells} for many additional references.


\medskip

Most recently, there has been a sustained interest in studying similar problems
where the shape formation is not driven by exterior forces but rather 
by the internal prestrain caused by {\it e.g.}  growth, swelling,
shrinkage or plasticity \cite{sharon, ESK1, 22a}.  Variants of a thin
plate theory can be used to study the self-similar structures which form due to 
variations in an intrinsic metric of a surface that is asymptotically flat at
infinity \cite{1b}, and also in the case of a circular disk with
edge-localized growth \cite{ESK1},  or in the shape of a
long leaf \cite{18b}. Ben Amar and coauthors  formally
derived a variant of the F\"oppl-von K\'arm\'an equilibrium equations
from finite incompressible elasticity \cite{DB, DCB}: they use the
multiplicative decomposition of the gradient proposed in \cite{RHM}
similar to ours. They take cockling of paper, grass blades and 
sympatelous flowers as examples \cite {DCB, BMT}.  

A systematic study of the
possible limit problems when a target metric is prescribed was
undertaken by the first author and collaborators: a  residually strained version of the Kirchhoff 
theory for plates was, for the first time, rigorously derived in
\cite{lepa} under the assumption that the target 
metric is independent of thickness. This analysis was completed in 
\cite{BLS}, resulting in a necessary and sufficient condition that the elastic prestrained energy scales as $h^2$.  
The object of the present paper is to study higher order prestrains.

Let us also mention that in \cite{lemapa2, lemapa2new, LOP} similar derivations were carried
out under a different assumption on the 
asymptotic behavior of the prescribed metric, which also implied
energy scaling $h^\beta$ in different regimes of $\beta>2$. In
\cite{lemapa2} it was shown that the resulting equations  
are identical to those postulated to account for the effects of growth
in elastic plates \cite{18b} and used to describe the shape of a long
leaf. In \cite{LOP} a model with a  Monge-Amp\`ere constraint was derived and analysed
from various aspects. 
Other results  concerning the energy scaling for the materials with
residual strain are derived in \cite{KoBe}, where by imposing
suitable boundary data, conditions of \cite{lepa, BLS} are not satisfied and hence the residual energy
scales larger than $h^2$, depending on the type of these boundary  data (see also \cite{22a}).  

\subsection{The set-up and notation}

Let $\Omega$ be an open, bounded, smooth and simply connected subset
of $\mathbb{R}^2$. For $0<h\ll 1$ we consider thin films $\Omega^h$
around the mid-plate $\Omega$:
\begin{equation}
\Omega^h =  \big\{x=(x',
  x_3);~~ x'\in\Omega, ~~x_3\in(-h/2, h/2)\big\}.
\end{equation}
Let $G:\bar{\Omega}^h\rightarrow \mathbb{R}^{3\times 3}$ be a given smooth
Riemann metric on $\Omega^h$, uniform through the thickness:
$$G(x',x_3)=G(x') \quad \mbox{for every } (x',x_3)\in \Omega^h,$$ 
and let $A=\sqrt{G}$ denote the unique positive definite symmetric square root of $G$.
Consider the energy functional $E^h: W^{1,2}(\Omega^h,
\mathbb{R}^3)\rightarrow \bar{\mathbb{R}}_+$ defined as: 
\begin{equation}\label{functional}
E^h(u^h)=\frac{1}{h}\int_{\Omega^h} W(\nabla u^hA^{-1})\dx.
\end{equation}
The nonlinear elastic energy density $W:\mathbb{R}^{3\times
  3}\rightarrow \bar{\mathbb{R}}_{+}$ is a Borel measurable function, 
assumed to be $\mathcal{C}^2$ in a neighborhood
of $SO(3)$ and to satisfy, for every $F\in \mathbb{R}^{3 \times 3}$,
every $R\in SO(3)$ and with a uniform constant $c>0$, the conditions: 
\begin{equation}\label{properties}
\begin{split}
& W(R)=0, \qquad W(RF)=W(F), \qquad W(F)\geq c\dist^2\pare{F, SO(3)}.
\end{split}
\end{equation}
The first condition states that the energy of a rigid motion is
$0$, while the second is the frame invariance. They imply that
$DW(\mbox{Id}_3) =0$ and that $D^2W(\mbox{Id}_3)(S,\cdot)=0$ for all
skew symmetric matrices $S\in so(3)$. The third assumption above reflects the
quadratic growth of the density $W$ away from the energy well
$SO(3)$. Note that these assumptions are not contradictory with
the physical condition $W(F)=\infty$ for $\det F\leq 0$.

Throughout the paper, we use the following notation. Given a matrix
$F\in\mathbb{R}^{3\times 3}$, we denote 
its transpose by $F^t$, its symmetric part by $\mbox{sym} F =
\frac{1}{2}(F + F^t)$, and its skew part by $\mbox{skew} F = F - \mbox{sym} F$. 
By $SO(n) = \{R\in\mathbb{R}^{n\times n}; ~ R^t = R^{-1} \mbox{ and }
  \det R=1\}$ we denote the group of special rotations, while $so(n) =
\{F\in \mathbb{R}^{n\times n}; ~ \mbox{sym}F=0\}$ is the space of
skew-symmetric matrices.
We use the matrix norm $|F| = (\mbox{trace}(F^t F))^{1/2}$, which
is induced by the inner product $\langle F_1 : F_2\rangle =
\mbox{trace}(F_1^t F_2)$. The $2 \times 2$ principal minor of a matrix
$F\in\mathbb{R}^{3\times 3}$ is denoted by $F_{2\times 2}$.
Conversely, for a given $F_{2\times 2} \in \mathbb{R}^{2\times 2}$, the $3 \times 3$ matrix with
principal minor equal $F_{2\times 2}$ and all other entries equal to
$0$, is denoted by $(F_{2\times 2})^*$. All limits are
taken as the thickness parameter $h$ vanishes, i.e. when $h\to 0$. Finally, by $C$ we denote any
universal constant, independent of $h$.

\subsection{Some previous directly related results}
It has been proved in \cite{lepa} that: 
$$\inf_{u^h\in W^{1,2}(\Omega^h, \mathbb{R}^3)} E^h(u^h) = 0$$ 
if and only if the Riemann curvature tensor of $G$ vanishes identically in $\Omega^h$, i.e.:
$\mbox{Riem}(G)\equiv 0$, and when (equivalently) the infimum above is achieved
through a smooth isometric immersion $u^h$ of the metric $G$ on
$\Omega^h$. Further, in \cite{BLS} it is proved that: 
$$ \lim_{h\to 0} \frac{1}{h^2} \inf E^h=0$$
if and only if the following Riemann curvatures of $G$ vanish identically:
\begin{equation}\label{vanish}
R_{1212}=R_{1213}=R_{1223} \equiv 0 \quad \mbox{in } \Omega^h.
\end{equation}
More generally, the limit behavior of the rescaled energies $h^{-2}E^h$ has been investigated in \cite{BLS}
and it has been proved that their $\Gamma$-limit is given by the functional:
$$\mathcal{I}_2(y) = \frac{1}{24}\int_\Omega \mathcal{Q}_{2,A}\big(x',
(\nabla y)^t\nabla \vec b\big)~\mbox{d}x',$$
effectively defined on the set of all $y\in
W^{2,2}(\Omega,\mathbb{R}^3)$ such that $(\nabla y)^t\nabla y =
G_{2\times 2}$. The quadratic forms $\mathcal{Q}_{2, A}(x', \cdot)$
are given by means of the energy density $W$ as in (\ref{Q2A}). The Cosserat vector $\vec b\in
W^{1,2}\cap L^\infty(\Omega,\mathbb{R}^3)$ is uniquely determined from
the isometric immersion $y$ by:
\begin{equation}\label{Q_0a}
Q^t Q=G \quad \mbox{where} \quad Qe_1=\partial_1 y, \;\;\; Qe_2=\partial_2
y, \;\;\;  Qe_3=\vec b, \quad \mbox{with } \det{Q}>0.
\end{equation}
Observe that the functional $\mathcal{I}_2$ is a Kirchhoff-like fully nonlinear
bending energy, which in case of $Ge_3 = e_3$ reduces to the classical
bending content quantifying the second fundamental form $(\nabla y)^t\nabla
\vec b= (\nabla y)^t\nabla \vec N$ on the deformed surface $y(\Omega)$ with the
unit normal vector $\vec N$.

We recall that by Theorems 5.3, 5.5 and Corollary 5.4 in \cite{BLS},
the negation of condition \eqref{vanish} is equivalent to $\min
\mathcal{I}_2>0$. For this reason, \eqref{vanish} 
is equivalent to the existence of a, necessarily unique and smooth, isometric immersion
$y_0:\Omega \rightarrow \mathbb{R}^3$ of $G_{2\times 2}$, such that:
\begin{equation}\label{sym0}
\left\{\begin{split}
& (\nabla y_0)^t\nabla y_0 = G_{2\times 2}\\
& \mbox{sym}\big((\nabla y_0)^t \nabla\b\big)=0. 
\end{split}\right. 
\end{equation}
Above, the smooth vector field $\vec b_0$ and the smooth matrix field
$Q_0$ are given as in (\ref{Q_0a}):
\begin{equation}\label{Q_0}
Q_0^t Q_0=G, \;\;\; Q_0e_1=\partial_1 y_0, \;\;\; Q_0e_2=\partial_2
y_0 \;\;\;\mbox{and} \;\;\; Q_0e_3=\b \quad \mbox{with } \det{Q_0}>0.
\end{equation}
Equivalently, denoting the inverse matrix $G^{-1}=[G^{ij}]_{i,j:1.. 3}$, we have:
\begin{equation}\label{exprb}
\b=-\frac{1}{G^{33}}\big(G^{13}\partial_1y_0+G^{23}\partial_2y_0\big)+\frac{1}{\sqrt{G^{33}}}\vec N.
\end{equation}
Uniqueness of the immersion $y_0$ in (\ref{sym0}) follows from Theorem 5.3
in \cite{BLS} which shows that the second fundamental form of the
surface $y_0(\Omega)$ is given in terms of $G$. Therefore, both fundamental forms are known.
Also, the second equation in (\ref{sym0}) comes from the fact that the kernel
of each quadratic form $\mathcal{Q}_{2,A}$ consists of $so(2)$.

\subsection{New results of this work}
In view of the above statements, in this paper we investigate
smaller energy scalings and the limiting behaviour of the minimizing configurations to
$E^h$ under condition \eqref{vanish}. We first prove (in Lemma \ref{scalingh4}) that (\ref{vanish}), which
as we recall is equivalent to (\ref{scale}), implies:
$$\inf E^h\leq Ch^4.$$
We then derive (in Theorem \ref{lowerless4} and Theorem
\ref{upperless4})  the $\Gamma$-limit $\mathcal{I}_4$ of the rescaled
energies $h^{-4}E^h$, together with their compactness properties. 

Namely, let $y_0$ be the unique
immersion satisfying (\ref{sym0}), where $\vec b_0$ is as in (\ref{Q_0}).
Let $\d:\Omega\rightarrow\mathbb{R}^3$ be the smooth vector field
given in terms of $y_0$ by:
\begin{equation}\label{d}
\langle Q_0^t\d, e_1\rangle = -\langle\partial_1\b, \b\rangle, \;\;\;
\langle Q_0^t\d, e_2\rangle = -\langle\partial_2 \b, \b\rangle,\;\;\;
\langle Q_0^t\d, e_3\rangle = 0.
\end{equation}
The limit $\mathcal{I}_4$ is then the following energy functional: 
\begin{equation}\label{limit_fun2}
\begin{split}
\mathcal{I}_{4}(V,\mathbb{S}) = ~ & \frac{1}{2}\int_{\Omega} \mathcal{Q}_{2,A}
\pare{x', \mathbb{S}+\frac{1}{2} (\nabla V)^t\nabla V + \frac{1}{24} (\nabla\b)^t\nabla\b }\dx ' \\
& + \frac{1}{24}\int_{\Omega} \mathcal{Q}_{2,A}\pare{x',  (\nabla
  y_0)^t \nabla \vec p   +   (\nabla V)^t\nabla\b } \dx ' \\
& +\frac{1}{1440}\int_{\Omega} \mathcal{Q}_{2,A}\pare{ x', (\nabla
    y_0)^t\nabla\d + (\nabla\b)^t\nabla\b  } \dx',
\end{split}
\end{equation}
acting on the space of finite strains:
$$\mathbb{S}\in \mathrm{cl}_{L^2}\big\{\mbox{sym}\big((\nabla y_0)^t\nabla w);~
w\in W^{1,2}(\Omega,\mathbb{R}^3\big)\big\}$$ 
and the space of first order infinitesimal isometries:
$$V\in W^{2,2}(\Omega, \mathbb{R}^3) \quad \mbox{ such that: } \quad
\mbox{sym}\big((\nabla y_0)^t\nabla V)_{2\times 2} = 0,$$ 
where the vector field $\vec p\in W^{1,2}(\Omega,\mathbb{R}^3)$ is uniquely
associated
with each $V$ by:
$ (\nabla y_0)^t \vec p =-(\nabla V)^t \b$ and  $\inn{\b}{\vec p} =0.$

The spaces consisting of  $\mathbb{S}$ and $V$ contain the information
about the admissible error displacements,
relative to the leading order immersion $y_0$, under the energy scaling
$E^h\sim h^4$. We discuss the
geometrical significance of $V$ and $\mathbb{S}$ and of various
bending and stretching tensors in the first two terms of 
$\mathcal{I}_4(V, \mathbb{S})$ in section \ref{secexp}.
We further prove in Theorem \ref{strangeresult} that the last term in
(\ref{limit_fun2}), which is obviously constant and as such does not
play a role in the minimization process, is precisely given by the only potentially nonzero
(in view of (\ref{vanish})) curvatures of $G$, namely:
\begin{equation*}
\sym \big((\nabla y_0)^t\nabla \vec d_0\big)  + (\nabla\vec b_0)^t \nabla\vec
b_0 = \left[\begin{array}{cc} R_{1313} & R_{1323} \\ R_{1323} & R_{2323}\end{array}\right].
\end{equation*}
We may thus write, informally:
\begin{equation*}
\begin{split}
\mathcal{I}_{4}(V,\mathbb{S}) = ~ & \frac{1}{2}\int_{\Omega} \mathcal{Q}_{2,A}
(x', \mbox{stretching of order } h^2) \dx ' 
+ \frac{1}{24}\int_{\Omega} \mathcal{Q}_{2,A}(x', \mbox{bending of order } h) \dx ' \\
& \qquad +\frac{1}{1440}\int_{\Omega} \mathcal{Q}_{2,A} (x',
  \mbox{Riemann curvature of } G ) \dx'.
\end{split}
\end{equation*}
In particular, since all three terms above are nonnegative,
this directly implies that the condition
$\lim_{h\to 0}\frac{1}{h^4}\inf E^h = 0,$
which is equivalent to $\min \mathcal{I}_4=0$, is further
equivalent to the immersability $G$, i.e. the vanishing of
all its Riemann curvatures $\mbox{Riem}(G) \equiv 0$ in $\Omega^h$.

\subsection{Acknowledgments}
M.L. was partially supported by the NSF grant DMS-0846996
and the NSF grant DMS-1406730. A part of this work has been carried out while
the first author visited the second author at the Universit\'e Paris Descartes,
whose support and warm hospitality are gratefully acknowledged.

\section{The scaling and approximation lemmas}

We first introduce the following notation. Let $B_0(x')$ be the matrix field satisfying:
\begin{equation}\label{B_0}
B_0e_1=\partial_1 \b, \;\;\; B_0e_2=\partial_2 \b \;\;\;\mbox{and}
\;\;\; B_0e_3=\d,
\end{equation}
where $\vec d_0$ is given by (\ref{d}). Observe that in this way  $Q_0^tB_0$ is skew symmetric. Indeed, it has
the following block form:
\begin{equation}\label{skew}
Q_0^tB_0=
\left[\begin{array}{c|c}
\pare{\nabla y_0}^t \nabla \b &  \pare{\nabla y_0}^t\d \\ \hline
(\b)^t\nabla \b & \langle \b,\d\rangle
\end{array}\right]
\end{equation}
and by \eqref{sym0} we see that $\pare{\nabla y_0}^t \nabla
\b\in so(2)$ is skew symmetric, while by \eqref{d}: $\pare{\nabla
  y_0}^t\d=-(\nabla \b)^t\vec b_0$ and $\langle\b,\d\rangle = 0$. 

\begin{lemma}\label{scalingh4}
Condition (\ref{vanish}) implies: $\displaystyle{\inf_{W^{1,2}(\Omega^h,\mathbb{R}^3)} E^h \leq Ch^4}$.
\end{lemma}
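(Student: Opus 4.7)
The plan is to construct an explicit recovery sequence achieving $E^h(u^h)\leq Ch^4$. Under (\ref{vanish}), the setup provides through (\ref{sym0})-(\ref{d}) the smooth fields $y_0$, $\b$ and $\d$ on $\bar\Omega$; the natural Kirchhoff-type ansatz with quadratic warp is
\begin{equation*}
u^h(x',x_3) = y_0(x') + x_3\,\b(x') + \frac{x_3^2}{2}\,\d(x'),
\end{equation*}
which is smooth, hence in $W^{1,2}(\Omega^h,\mathbb{R}^3)$. The heuristic is that the linear term in $x_3$ is calibrated to match $G$ to first order because (\ref{sym0}) forces $(\nabla y_0)^t\nabla\b \in so(2)$, while the quadratic correction $\d$ is chosen by (\ref{d}) precisely to kill the remaining first-order defects in the mixed $(i,3)$ and $(3,3)$ entries of the pulled-back metric.

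The core computation is direct. Writing $\nabla u^h = Q_0 + x_3 B_0 + \frac{x_3^2}{2}C_0$ with $Q_0,B_0$ as in (\ref{Q_0}), (\ref{B_0}) and $C_0 = [\partial_1\d\,|\,\partial_2\d\,|\,0]$ bounded on $\bar\Omega$, expansion yields
\begin{equation*}
(\nabla u^h)^t(\nabla u^h) = Q_0^tQ_0 + 2x_3\,\sym(Q_0^tB_0) + \bigo{x_3^2} = G + \bigo{x_3^2},
\end{equation*}
where the linear-in-$x_3$ cross term drops out by the skew-symmetry $Q_0^tB_0 \in so(3)$ recorded in (\ref{skew}), and the remainder is uniform in $x'\in\bar\Omega$. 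Conjugating by $A^{-1}$ and using $A^{-1}GA^{-1} = \Id$ then gives
\begin{equation*}
\bigl((\nabla u^h)A^{-1}\bigr)^t(\nabla u^h)A^{-1} = \Id + \bigo{x_3^2}.
\end{equation*}

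To convert this metric defect into an energy bound, I would use the standard local estimate $W(F)\leq C\,|F^tF - \Id|^2$ available in a neighborhood of $SO(3)$, which follows from $W \leq C\,\dist^2(F,SO(3))$ combined with the polar decomposition identity $\dist(F,SO(3)) = |\sqrt{F^tF} - \Id|$ and smoothness of the matrix square root near $\Id$. For $h$ small enough, $(\nabla u^h)A^{-1}$ stays uniformly close to the rotation field $Q_0A^{-1}\in SO(3)$, so the estimate applies and yields the pointwise bound $W((\nabla u^h)A^{-1})\leq C x_3^4$. Integrating over $\Omega^h$ and dividing by $h$:
\begin{equation*}
E^h(u^h) = \frac{1}{h}\int_{\Omega^h}W((\nabla u^h)A^{-1})\dx \leq \frac{C}{h}\,|\Omega|\int_{-h/2}^{h/2}x_3^4\dxt \leq Ch^4.
\end{equation*}

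The main subtlety is the use of the sharper quadratic bound in $|F^tF - \Id|$ rather than the naive $|F - R_0|$; the latter would only give $\bigo{x_3^2}$ pointwise and the incorrect scaling $h^2$. Everything else is routine once the correct ansatz, built from $y_0$, $\b$ and the carefully tailored $\d$, has been identified.
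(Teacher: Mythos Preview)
Your proposal is correct and follows essentially the same approach as the paper: the same ansatz $u^h = y_0 + x_3\b + \tfrac{x_3^2}{2}\d$, the same key cancellation via $Q_0^tB_0\in so(3)$, and the same conclusion that the metric defect is $\bigo{x_3^2}$, hence $W\leq Cx_3^4$ pointwise. One minor remark: the local upper bound $W(F)\leq C\dist^2(F,SO(3))$ you invoke is not among the stated assumptions (only the lower bound is); it follows instead from $W$ being $\mathcal{C}^2$ near $SO(3)$, nonnegative, and vanishing there, which is exactly how the paper obtains the same estimate via frame invariance and Taylor expansion of $W$ at $\Id$.
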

\begin{proof}
Let us construct a sequence $u^h\in W^{1,2}(\Omega^h, \mathbb{R}^3)$ that has low energy. Let:
\begin{equation}\label{change2}
 u^h(x',x_3)= y_0(x')+x_3\b(x')+\frac{x_3^2}{2}\d(x'),
\end{equation}
in fact each $u^h$ is the restriction on its domain $\Omega^h$ of the same deformation.
We have:
\begin{equation*}
\nabla u^h(x',x_3)= Q_0(x')+x_3B_0(x')+\frac{x_3^2}{2}D_0(x') ,
\end{equation*}
where the matrix field  $D_0(x')\in\mathbb{R}^{3\times 3}$ is given through:
\begin{equation*}
D_0(x')e_1=\partial_1 \d, \;\;\;
D_0(x')e_2=\partial_2 \d,\;\;\;
D_0(x')e_3=0,
\end{equation*}
so that:
\begin{equation*}
\nabla u^h A^{-1} = Q_0 A^{-1} +x_3B_0 A^{-1}  +\frac{x_3^2}{2}D_0 A^{-1}.
\end{equation*}
For brevity, denote $F^h=\nabla u^h A^{-1}$. Obviously, $F^h$ decomposes as:
\begin{equation}\label{burp}
F^h(x',x_3)=Q_0A^{-1}(x')(\Id+x_3S(x')+x_3^2T(x'))=(Q_0A^{-1}(x')) G^h(x',x_3)
\end{equation}
with $S= A^{-1}Q_0^t B_0 A^{-1}$, $T= \frac{1}{2}A^{-1}Q_0^t D_0
A^{-1}$ and  $G^h= \Id+x_3S +x_3^2T$. Since $Q_0A^{-1}\in SO(3)$ by construction, frame indifference implies that
$W(F^h)= W \pare{((G^h)^tG^h)^{1/2}}$.
Note that $S$ is skew symmetric, because $Q_0^tB_0$ is skew
symmetric. Therefore, $(G^h)^tG^h$ and the expansion of its square
root do not contain terms linear in $x_3$. Indeed, letting $K=
T+T^t-S^2$: 
\begin{equation*}
((G^h)^t G^h)(x',x_3)=\Id+x_3^2K(x')+  \bigo{x_3^3}
\end{equation*}
and:
\begin{equation*}
((G^h)^tG^h)^{1/2}(x',x_3)=\Id+\frac{x_3^2}{2}K(x')+ \bigo{x_3^3}.
\end{equation*}
As a consequence, using $W(\Id)=0$ and $DW(\Id)=0$, we obtain:
\begin{equation*}
W(F^h)= W\pare{((G^h)^t G^h)^{1/2}}= \frac{x_3^4}{8} D^2W(\Id)(K,K) + \bigo{x_3^5}.
\end{equation*} 
Using (\ref{functional}), we get
\begin{equation*}
E^h(u^h)=\frac{1}{h}\int_{\Omega^h} W(F^h)~\mbox{d}x\leq Ch^4,
\end{equation*}
which accomplishes the proof of the lemma.
\end{proof}

\bigskip

In Lemma \ref{scalingh4} above, we constructed deformations whose
gradient was sufficiently close to $Q_0+x_3B_0$, to provide the energy
of the order $h^4$. Conversely, in Corollary \ref{lemma_estimate2}
below, we establish that the gradients of deformations $u^h$ whose
energy scales like $h^4$, are close to $Q_0+x_3B_0$ modulo local
multiplications by $R^h(x')\in SO(3)$.  Corollary
\ref{lemma_estimate2} makes this statement precise and gives an
estimation on $\nabla R^h$ as well. 

\bigskip

For any $\mathcal{V}$ which is an open subset of $\Omega$, we let
$\Vh=\mathcal{V} \times (-h/2,h/2)$ and  we define the local energy
functional by:  
\begin{equation*}
 E^h(u^h,\Vh)=\frac{1}{h}\int_{\Vh} W(\nabla u^h A^{-1})\dx.
\end{equation*}

\begin{lemma}\label{Lemma_approx}
Assume (\ref{vanish}). There exists a constant $C>0$ with the following property. For any $u^h\in
W^{1,2}(\Vh,  \mathbb{R}^3)$, there exists $\bar R^h\in SO(3)$ such
that:  
\begin{equation}
\frac{1}{h}\int_{\Vh} \abs{\nabla u^h(x) -\bar R^h ( Q_0(x')+x_3B_0(x'))}^2 \dx
\leq C \left(  E^h(u^h, \Vh) +h^3 |\Vh| \right).
\end{equation}
The constant $C$ is  uniform
for all  $\mathcal{V}^h$ which are bi-Lipschitz equivalent with controlled Lipschitz constants.
\end{lemma}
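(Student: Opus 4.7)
The plan is to apply the Friesecke--James--M\"uller geometric rigidity theorem of \cite{FJMgeo} after a change of variables that ``flattens'' the target metric, using the almost-isometric immersion constructed in the proof of Lemma \ref{scalingh4}.

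Set $\Phi^h(x',x_3) = y_0(x') + x_3 \b(x') + \frac{x_3^2}{2} \d(x')$, as in (\ref{change2}). By (\ref{burp}),
\begin{equation*}
\nabla \Phi^h A^{-1} = (Q_0 A^{-1})(\Id + x_3 S + x_3^2 T),
\end{equation*}
with $Q_0 A^{-1} \in SO(3)$ pointwise and $S$ skew, so that $(\nabla \Phi^h A^{-1})^t (\nabla \Phi^h A^{-1}) = \Id + \bigo{x_3^2}$. For $h$ small, $\Phi^h$ is therefore a bi-Lipschitz diffeomorphism of $\Vh$ onto $\tilde{\mathcal{V}}^h := \Phi^h(\Vh)$, with uniform bi-Lipschitz constants and $|\det \nabla \Phi^h| \geq c > 0$; in particular the bi-Lipschitz class of $\tilde{\mathcal{V}}^h$ is controlled by that of $\Vh$. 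Define $v^h = u^h \circ (\Phi^h)^{-1}: \tilde{\mathcal{V}}^h \to \mathbb{R}^3$, so that $\nabla v^h \circ \Phi^h = (\nabla u^h A^{-1})(\nabla \Phi^h A^{-1})^{-1}$.

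The polar decomposition of $\nabla \Phi^h A^{-1}$ gives a pointwise rotation $R^h(x) \in SO(3)$ with $(\nabla \Phi^h A^{-1})^{-1} = (R^h)^t + \bigo{x_3^2}$. Letting $\hat R(x) \in SO(3)$ be the nearest rotation to $\nabla u^h(x) A^{-1}$, one compares $\nabla v^h \circ \Phi^h$ directly to $\hat R (R^h)^t \in SO(3)$, controlling $|\nabla u^h A^{-1}| \leq 1 + \dist(\nabla u^h A^{-1}, SO(3))$, to obtain the pointwise bound
\begin{equation*}
\dist^2\pare{\nabla v^h \circ \Phi^h, SO(3)} \leq C\, \dist^2\pare{\nabla u^h A^{-1}, SO(3)} + C x_3^4.
\end{equation*}
A change of variables together with $W(F) \geq c\, \dist^2(F,SO(3))$ from (\ref{properties}) then yields
\begin{equation*}
\int_{\tilde{\mathcal{V}}^h} \dist^2(\nabla v^h, SO(3)) \dy \leq C h E^h(u^h, \Vh) + C h^4 |\Vh|.
\end{equation*}
Applying the Friesecke--James--M\"uller rigidity on $\tilde{\mathcal{V}}^h$ produces $\bar R^h \in SO(3)$ satisfying the same bound for $\int_{\tilde{\mathcal{V}}^h} |\nabla v^h - \bar R^h|^2 \dy$. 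Pulling back via $\Phi^h$, using $\nabla u^h - \bar R^h \nabla \Phi^h = (\nabla v^h \circ \Phi^h - \bar R^h) \nabla \Phi^h$ together with the trivial estimate $|\nabla \Phi^h - (Q_0 + x_3 B_0)| \leq C x_3^2$, and finally dividing by $h$, yields the claim, since both $\int_{\Vh} x_3^4 \dx$ and $h^4|\Vh|$ are absorbed into $h^3|\Vh|$ for $h \leq 1$.

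The main technical obstacle is the uniformity of the FJM rigidity constant for the curved thin-shell domains $\tilde{\mathcal{V}}^h$, which are tubular neighborhoods of thickness $\sim h$ around the smooth surface $y_0(\mathcal{V})$. This is handled by the thin-shell analogue of \cite{FJMgeo} familiar from the literature on shell $\Gamma$-convergence: the constant depends only on the smooth geometry of the midsurface and on the Lipschitz class of $\mathcal{V}$, and hence remains bounded over the admissible bi-Lipschitz class. A secondary subtlety is the distance comparison, where a brute-force triangle inequality would be contaminated by the unbounded factor $|\nabla u^h A^{-1}|$; this is bypassed by projecting $\nabla u^h A^{-1}$ onto $SO(3)$ before estimating.
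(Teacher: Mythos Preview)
Your proof is correct and follows essentially the same route as the paper's: both introduce the change of variables $\Phi^h = Y$ from (\ref{change2}), transfer the distance-to-$SO(3)$ estimate to the new domain with an $O(x_3^2)$ error coming from the skewness of the linear-in-$x_3$ part of $\nabla\Phi^h A^{-1}$, apply Friesecke--James--M\"uller rigidity on $\Phi^h(\Vh)$ with a constant uniform over the bi-Lipschitz class, and pull back while absorbing the $\frac{x_3^2}{2}D_0$ remainder. The only cosmetic difference is in how the distance comparison is argued: the paper uses the left polar decomposition $\nabla Y A^{-1} = (\Id + x_3^2 M)R$ and compares $SO(3)$ with $SO(3)(\Id+x_3^2M)^{-1}$, whereas you project $\nabla u^h A^{-1}$ onto $SO(3)$ first and control the cross term via $|\nabla u^h A^{-1}|\leq C(1+\dist(\nabla u^h A^{-1},SO(3)))$; both yield the same inequality.
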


\begin{proof}
By assumption (\ref{properties}), we have:
\begin{equation}\label{minE}
E^h(u^h, \Vh) \geq \frac{c}{h}\int_{\Vh} \dist^2\pare{ \nabla u^h A^{-1}, SO(3) } \dx.
\end{equation}
This suggests performing a change of variables in order to use the
nonlinear geometric rigidity estimate \cite{FJMhier}. 
For any $u^h\in W^{1,2}(\Vh,  \mathbb{R}^3)$,  we let $v^h=u^h\circ
Y^{-1}$ with $Y:\Vh \rightarrow Y(\Vh)=\Uh\subset\mathbb{R}^3$ given
as in (\ref{change2}), namely: 
\begin{equation*}
 Y(x',x_3)= y_0(x')+x_3\b(x')+\frac{x_3^2}{2}\d(x').
\end{equation*}
Obviously, $v^h\in W^{1,2}(\Uh, \mathbb{R}^3)$ and:
\begin{equation}\label{changenab}
\nabla u^h A^{-1}(x', x_3) = \nabla v^h(z) (\nabla Y A^{-1}) (x',x_3), \qquad z:= Y(x', x_3).
\end{equation}
Let $S'= B_0 Q_0^{-1}$ and $T'= \frac{1}{2}D_0 Q_0^{-1}$. Note that
$S' =B_0 Q_0^{-1} = Q_0^{-1, t} (Q_0^tB_0 Q_0^{-1}) = - Q_0^{-1,t}
B_0^t $ in view of $Q_0^tB_0\in so(3)$. Therefore
$S'\in so(3)$. Computations as in Lemma \ref{scalingh4} now give: 
\begin{equation}\label{grady}
\nabla Y(x',x_3)= Q_0(x')+x_3B_0(x')+\frac{x_3^2}{2}D_0(x'),
\end{equation}
and:
\begin{equation*}
\nabla Y A^{-1}  =\pare{\Id+x_3S'(x')+x_3^2T'(x')} (Q_0A^{-1}).
\end{equation*}
We see that for $h$ small, $\det (\nabla Y A^{-1})>0$. Further, the
left polar decomposition $\nabla Y A^{-1}= \pare{\nabla Y
  A^{-1}(\nabla Y A^{-1})^t}^{1/2} R$, allows us to write:
\begin{equation*}
\nabla Y A^{-1}=  (\Id+x_3^2 M(x', x_3)) R(x', x_3),
\end{equation*}
where $M=\bigo{1}$ is a symmetric matrix field and $R\in SO(3)$. Again, the symmetric
term does not contain any term linear in $x_3$. Therefore:
\begin{equation*}\label{est_O}
\begin{split}
 \dist &\pare{\nabla v^h \nabla Y A^{-1}, SO(3)}  = \dist  \pare{\nabla v^h  (\Id+x_3^2 M) R, SO(3)}\\
 &= \dist  \pare{\nabla v^h  (\Id+x_3^2 M), SO(3)} \geq c \dist \pare{\nabla v^h, SO(3)(\Id+x_3^2 M)^{-1}}\\
&\geq c \dist \pare{\nabla v^h, SO(3)}+\bigo{x_3^2}.
\end{split}
\end{equation*}
Now, let $J=\abs{\det \nabla Y \circ Y^{-1}}^{-1}$. By (\ref{changenab}) and the above computation:
\begin{equation*}\label{est_part1}
\int_{\Vh} \dist^2\pare{ \nabla u^h A^{-1}, SO(3) }\dx   \geq
c\int_{\Uh}\dist^2 \pare{ \nabla v^h, SO(3) } J\dz - c \int_{\Vh} x_3^4
\dx. 
\end{equation*}
In other words, since $J\geq c>0$:
\begin{equation*}
\frac{1}{h}\int_{\Vh} \dist^2\pare{ \nabla u^h A^{-1}, SO(3) }\dx  +
h^3 |\Vh| \geq \frac{c}{h}\int_{\Uh}\dist^2 \pare{ \nabla v^h, SO(3)} \dz. 
 \end{equation*}
By \cite{FJMhier}, there exists $C>0$ with the following property. For
any $v^h\in W^{1,2}(\Uh, \mathbb{R}^3)$, there exists $\bar R^h\in
SO(3)$ such that: 
\begin{equation*} 
C \int_{\Uh} \dist^2 \pare{\nabla v^h, SO(3)} \dz \geq \int_{\Uh} \abs{\nabla v^h- \bar R^h}^2\dz.
\end{equation*}
The constant $C$ can be chosen uniformly for domains $\Uh$ which are
bi-Lipschitz equivalent with controlled Lipschitz constants. By
(\ref{minE}) and the reverse change of variables which satisfies
$J^{-1}\geq c>0$ and $|\nabla Y|\leq C$, we obtain:
\begin{equation*}\label{est_part2}
C\left(E^h(u^h, \Vh) + h^3  |\Vh|\right) \geq  \frac{1}{h} \int_{\Vh} \abs{\nabla u^h  -Ê\bar R^h \nabla Y}^2 \dx,
\end{equation*}
again with a constant $C$ uniform for domains $\Vh$ that are
bi-Lipschitz equivalent with controlled Lipschitz
constants. This accomplishes the proof in view of  (\ref{grady}).
\end{proof}

\begin{corollary}\label{lemma_estimate2}
Assume (\ref{vanish}) and let $u^h$ be a sequence of deformations such that: 
$$\lim_{h\to 0} {h^{-2}}E^h(u^h)=0.$$  Then, there
exist matrix fields $R^h\in W^{1,2}(\Omega, SO(3))$ such that: 
\begin{equation}\label{estimate1}
\frac{1}{h}\int_{\Omega^h} \abs{\nabu(x)-R^h(x')\pare{Q_0(x')+x_3B_0(x')}}^2 \dx
\leq  C  \pare{E^h(u^h)+h^4}
\end{equation}
and:
\begin{equation}\label{estimate2}
\int_{\Omega}\abs{\nabla R^h(x')}^2\dx' 
\leq \frac{C}{h^2}\pare{E^h(u^h)+h^4}.
\end{equation}
\end{corollary}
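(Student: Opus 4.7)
The plan is to follow the Friesecke--James--M\"uller globalization scheme: use Lemma~\ref{Lemma_approx} to produce a constant rotation $\bar R^h_{x'}\in SO(3)$ at every point $x'\in\Omega$ (one per local cylinder $\Bhx\times(-h/2,h/2)$, where $\Bhx$ is a disk of radius $\sim h$ about $x'$), and then patch these constants into a single Sobolev field $R^h:\Omega\to SO(3)$ by mollification and projection onto $SO(3)$.

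I would set $\tilde R^h(x')=\int_{\Omega}\Exz\,\bar R^h_{z'}\,\mathrm{d}z'$, where $\Ex$ is a standard smooth mollifier supported in $\Bhx$, and then define $R^h(x')=\proj(\tilde R^h(x'))$. The crucial technical ingredient is an estimate on the discrepancy between two nearby local rotations: for $z'\in\Bhx$, applying Lemma~\ref{Lemma_approx} on both cylinders $\Bhx\times(-h/2,h/2)$ and $\Bhz\times(-h/2,h/2)$ and comparing the approximations on the overlap (a cylinder of volume $\sim h^3$), together with the pointwise invertibility $|(Q_0+x_3B_0)\xi|\geq c|\xi|$ valid for small $h$, yields
\begin{equation*}
|\bar R^h_{x'}-\bar R^h_{z'}|^2\leq Ch^{-2}\bigl(E^h(u^h,(\Bhx\cup\Bhz)\times(-h/2,h/2))+h^6\bigr).
\end{equation*}
Under the hypothesis $h^{-2}E^h(u^h)\to 0$ the right-hand side is uniformly $o(1)$, so $\tilde R^h$ stays in a neighbourhood of $SO(3)$ on which $\proj$ is smooth, and Lipschitz continuity yields $|R^h(x')-\bar R^h_{x'}|\lesssim|\tilde R^h(x')-\bar R^h_{x'}|$.

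For (\ref{estimate2}), I would use $\int\nabla_{x'}\Exz\,\mathrm{d}z'=0$ to write $\nabla\tilde R^h(x')=\int\nabla_{x'}\Exz\,(\bar R^h_{z'}-\bar R^h_{x'})\,\mathrm{d}z'$; Cauchy--Schwarz together with $|\nabla_{x'}\Exz|\lesssim h^{-3}$ produces $|\nabla\tilde R^h(x')|^2\leq Ch^{-4}\int_{\Bhx}|\bar R^h_{z'}-\bar R^h_{x'}|^2\,\mathrm{d}z'$. Inserting the discrepancy estimate above and applying Fubini (for each cylinder in $\Omega^h$, the set of points $x'$ whose disk $\Bhx$ hits it has area $\sim h^2$) gives exactly the desired $Ch^{-2}(E^h(u^h)+h^4)$. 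For (\ref{estimate1}), by the triangle inequality it suffices to control $\|\nabla u^h-\tilde R^h(Q_0+x_3B_0)\|_{L^2(\Omega^h)}^2$ and $\|(R^h-\tilde R^h)(Q_0+x_3B_0)\|_{L^2(\Omega^h)}^2$ separately; the former is handled by writing $\nabla u^h-\tilde R^h M=\int\Exz(\nabla u^h-\bar R^h_{z'}M)\,\mathrm{d}z'$, Jensen, Fubini and Lemma~\ref{Lemma_approx} on each $\Bhz$, while the latter reduces to the pointwise estimate from the previous paragraph and the boundedness of $|Q_0+x_3B_0|$.

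The main obstacle is the uniform pointwise smallness of $\tilde R^h-\bar R^h_{x'}$ needed to place $\tilde R^h$ where $\proj$ is well defined and smooth: Lemma~\ref{Lemma_approx} provides only $L^2$-type control of the approximation, and converting this into a pointwise statement is precisely what forces the hypothesis $h^{-2}E^h(u^h)\to 0$ into play. The remainder is careful bookkeeping of the powers of $h$ attached to the mollifier, which, as inspection of the overlap volumes shows, matches the desired scaling exactly.
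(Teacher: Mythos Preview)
Your proposal is correct and follows the same Friesecke--James--M\"uller globalization scheme as the paper's proof in the Appendix: apply Lemma~\ref{Lemma_approx} on cylinders of radius $\sim h$, smooth the result by convolution at scale $h$, control the gradient via the cancellation $\int\nabla_{x'}\Exz\,\mathrm{d}z'=0$, project onto $SO(3)$ using the pointwise $o(1)$ smallness guaranteed by $h^{-2}E^h(u^h)\to 0$, and sum via a covering/Fubini argument.

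The one substantive difference is the object you mollify. You set $\tilde R^h(x')=\int_\Omega\Exz\,\bar R^h_{z'}\,\mathrm{d}z'$, averaging the \emph{local rotations}; the paper instead sets
\[
\tilde R^h(x')=\int_{\Omega^h}\Exz\,\nabla u^h(z)\,\big(Q_0(z')+z_3B_0(z')\big)^{-1}\,\mathrm{d}z,
\]
averaging the \emph{deformation gradient} (times the inverse target) over the full thin cylinder. The paper's choice has two small advantages: it sidesteps the question of whether a measurable selection $z'\mapsto\bar R^h_{z'}$ exists (Lemma~\ref{Lemma_approx} asserts only existence, not any joint regularity), and it makes $\tilde R^h\in W^{1,2}$ automatic since it is a mollification of an $L^2$ function. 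Your route is equally valid once you secure measurability---e.g.\ by choosing $\bar R^h_{z'}$ as a measurable minimizer of $R\mapsto\int_{B_{z',2h}}|\nabla u^h-R(Q_0+x_3B_0)|^2$, or by working on a lattice of centers and declaring $\bar R^h$ piecewise constant. One further bookkeeping point: to ensure the overlap of $\Bhx$ and $\Bhz$ really has volume $\sim h^3$ when $z'$ lies near the edge of $\Bhx$, take the local rotation on the \emph{doubled} cylinder $B_{x',2h}$ (as the paper does), so that $\Bhz\subset B_{x',2h}$ for every $z'\in\Bhx$.
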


The proof follows the lines of \cite{FJMhier, lepa, lemapa1}, with
necessary modifications in view of the expected error of the order
$h^4$. For completeness, we will present the details in the Appendix.

\section{The lower bound}

\begin{theorem}\label{lowerless4}
Let $u^h\in W^{1,2}(\Omega^h, \mathbb{R}^3)$ be a sequence of
deformations satisfying $E^h(u^h)\leq Ch^4$. Then there exists a
sequence of translations $c^h\in\mathbb{R}^3$ and rotations $\bar{R}^h\in
SO(3)$ such that the associated renormalizations:
\begin{equation}\label{yh}
y^h(x',x_3)=(\bar{R}^h)^tu^h(x', hx_3)-c^h \in W^{1,2}(\Omega^1, \mathbb{R}^3)
\end{equation}
have the following properties, where $y_0$ and $\vec b_0$ are the
unique solution to (\ref{sym0}) (\ref{Q_0}). All convergences hold up to a subsequence:
\begin{itemize}
\item[(i)]
$ y^h \rightarrow y_0$ in $W^{1,2}(\Omega^1, \mathbb{R}^3)$ and
$\frac{1}{h}\partial_3y^h\rightarrow \vec b_0$ in $L^2(\Omega^1, \mathbb{R}^3)$;
\item[(ii)] the scaled average displacements:
\begin{equation}\label{Vh_defa}
V^h(x')=\frac{1}{h}\fint_{-\frac{1}{2}}^{-\frac{1}{2}} \Big(y^h(x',x_3)-\big (y_0(x')+ h x_3\b(x')\big )\Big)\dxt
\end{equation}
converge in $W^{1,2}(\Omega, \mathbb{R}^3)$ to a limiting field $V\in
W^{2,2}(\Omega, \mathbb{R}^3)$, satisfying the constraint: 
\begin{equation}\label{sym0a}
\sym \pare{(\nabla y_0)^t\nabla V} =0;
\end{equation}
\item[(iii)] the scaled tangential strains:
$$\frac{1}{h} \sym \big( (\nabla y_0)^t\nabla V^h\big)$$
converge weakly in $L^2(\Omega,\mathbb{R}^{2\times 2})$ to some $\mathbb{S}\in
L^2(\Omega,\mathbb{R}^{2\times 2}_{\mbox{sym}})$. 
\item[(iv)] Further, defining the quadratic forms $\mathcal{Q}_{3}(F)=D^2W(\mathrm{Id}_3)(F,F)$ and:
\begin{equation}\label{Q2A}
\mathcal{Q}_{2,A}(x',F_{2\times 2})=\min  \left\{
  \mathcal{Q}_3\big(A(x')^{-1}\tilde{F}A(x')^{-1}\big); ~~
  \tilde{F}\in\mathbb{R}^{3\times 3}\;\; \mbox{with}
  \;\;\tilde{F}_{2\times 2}=F_{2\times 2} \right\}, 
\end{equation}
we have:
\begin{equation}\label{limit_fun}
\begin{split}
\liminf_{h\to 0} \frac{1}{h^4}E^h(u^h) \geq 
\mathcal{I}_{4}(V,\mathbb{S}) = ~ & \frac{1}{2}\int_{\Omega} \mathcal{Q}_{2,A}
\pare{x', \mathbb{S}+\frac{1}{2} (\nabla V)^t\nabla V + \frac{1}{24} (\nabla\b)^t\nabla\b }\dx ' \\
& + \frac{1}{24}\int_{\Omega} \mathcal{Q}_{2,A}\pare{x',  (\nabla
  y_0)^t \nabla \vec p   +   (\nabla V)^t\nabla\b } \dx ' \\
& +\frac{1}{1440}\int_{\Omega} \mathcal{Q}_{2,A}\pare{ x', (\nabla
    y_0)^t\nabla\d + (\nabla\b)^t\nabla\b  } \dx',
\end{split}
\end{equation}
where the vector field $\vec p\in W^{1,2}(\Omega,\mathbb{R}^3)$ is
uniquely associated  with $V$ by:
\begin{equation}\label{def_p}
\begin{cases}
(\nabla y_0)^t \vec p =-(\nabla V)^t \b \\
\inn{\b}{\vec p} =0.
\end{cases}
\end{equation}
\end{itemize}
\end{theorem}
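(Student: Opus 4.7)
The strategy is the Friesecke--James--M\"uller variational approach to $\Gamma$-lower bounds, adapted to the prestrained setting via the rigidity statement of Corollary \ref{lemma_estimate2}. That corollary, applied to a sequence $u^h$ satisfying $E^h(u^h) \leq Ch^4$, yields approximating fields $R^h \in W^{1,2}(\Omega, SO(3))$ with the residual of $\nabla u^h - R^h(Q_0 + x_3 B_0)$ small in $L^2$ and $\|\nabla R^h\|_{L^2}^2 \leq Ch^2$. Along a subsequence and after absorbing a constant rotation into the intrinsic $\bar R^h$ of (\ref{yh}), we arrange $R^h \to \Id$ strongly in $W^{1,2}(\Omega)$. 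Rescaling $x_3 \to hx_3$ and fixing $c^h$ so that the limit has zero mean then give both convergences of (i) at once.

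For (ii) and (iii), introduce the scaled rotation $A^h := h^{-1}(R^h - \Id)$, bounded in $W^{1,2}(\Omega)$ by (\ref{estimate2}) and hence weakly convergent to some $A$ in $W^{1,2}$ and strongly in every $L^p$ ($p<\infty$) by compact Sobolev embedding in two dimensions. The identity $(R^h)^t R^h = \Id$ rewrites as
$$\mathrm{sym}\, A^h = -\tfrac{h}{2}(A^h)^t A^h,$$
forcing $A \in so(3)$ almost everywhere in the limit. Differentiating (\ref{Vh_defa}) and substituting (\ref{estimate1}), the identity $\int_{-1/2}^{1/2} x_3 \, \mathrm{d}x_3 = 0$ yields $\nabla V^h \to A[\partial_1 y_0 \mid \partial_2 y_0]$ in $L^2(\Omega)$, hence $V^h \to V$ in $W^{1,2}$ with $\nabla V = A[\partial_1 y_0 \mid \partial_2 y_0]$; skew-symmetry of $A$ gives the constraint (\ref{sym0a}), and an elliptic-regularity argument on this first-order system upgrades $V$ to $W^{2,2}$. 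Statement (iii) follows by invoking the identity above once more: combined with (\ref{estimate1}), it shows $h^{-1}\mathrm{sym}((\nabla y_0)^t \nabla V^h)$ is bounded in $L^2$, furnishing the weak limit $\mathbb{S}$.

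The bulk of the work is the lower bound (iv). Since $Q_0 A^{-1}\in SO(3)$ pointwise, define $\bar R^h_{\mathrm{loc}}(x') := R^h(x') Q_0(x') A(x')^{-1} \in SO(3)$ and the rescaled scaled strain
$$K^h(x', x_3) := \frac{1}{h^2}\Bigl( (\bar R^h_{\mathrm{loc}}(x'))^t\, \nabla u^h(x', hx_3)\, A(x')^{-1} - \Id \Bigr), \qquad (x', x_3) \in \Omega^1.$$
Corollary \ref{lemma_estimate2} together with the quadratic $x_3$-expansion of Lemma \ref{scalingh4} yield that $K^h$ is bounded in $L^2(\Omega^1)$. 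A standard truncation argument as in \cite{FJMhier, lemopa1} replaces $K^h$ by a uniformly bounded $\tilde K^h$ coinciding with $K^h$ on a set of asymptotically full measure and sharing the same weak $L^2$-limit $K$. Frame invariance of $W$ and $DW(\Id) = 0$ give, on the good set,
$$h^{-4}\, W(\nabla u^h A^{-1}) = h^{-4}\, W(\Id + h^2 \tilde K^h) = \tfrac{1}{2}\mathcal{Q}_3(\tilde K^h) + o(1),$$
and weak lower semicontinuity of nonnegative quadratic forms yields
$$\liminf_{h\to 0} h^{-4} E^h(u^h) \geq \frac{1}{2}\int_{\Omega^1} \mathcal{Q}_3(K(x', x_3)) \, \mathrm{d}x.$$

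The final task is identifying $K$ and reducing $\mathcal{Q}_3$ to $\mathcal{Q}_{2,A}$. A Taylor expansion of $(\bar R^h_{\mathrm{loc}})^t \nabla u^h A^{-1} - \Id$ in both $h$ and $x_3$, using (a) the ansatz of Lemma \ref{scalingh4}; (b) $R^h = \Id + h A^h$; (c) the displacement $V$ recovered via (\ref{Vh_defa}); and (d) the next-order normal correction $\p$ determined pointwise and algebraically by (\ref{def_p}), decomposes the $2\times 2$ symmetric block of $\mathrm{sym}\, K$ into three $x_3$-orthogonal pieces: an $x_3$-independent strain block $\mathbb{S}+\tfrac{1}{2}(\nabla V)^t\nabla V + \tfrac{1}{24}(\nabla\b)^t\nabla\b$; an $x_3$-linear bending block proportional to $(\nabla y_0)^t\nabla\p + (\nabla V)^t\nabla\b$; and a centered-quadratic curvature block $\tfrac{1}{2}(x_3^2 - \tfrac{1}{12})\bigl[(\nabla y_0)^t\nabla\d + (\nabla\b)^t\nabla\b\bigr]$. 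Minimizing $\mathcal{Q}_3$ over the free third-column components, as in (\ref{Q2A}), yields $\mathcal{Q}_{2,A}$ on each of the three $2\times 2$ blocks, and integration in $x_3 \in (-\tfrac{1}{2},\tfrac{1}{2})$ using $\int_{-1/2}^{1/2} 1 \, \mathrm{d}x_3 = 1$, $\int_{-1/2}^{1/2} x_3^2 \, \mathrm{d}x_3 = \tfrac{1}{12}$, $\int_{-1/2}^{1/2} (x_3^2-\tfrac{1}{12})^2 \, \mathrm{d}x_3 = \tfrac{1}{180}$ then produces the weights $\tfrac{1}{2}, \tfrac{1}{24}, \tfrac{1}{1440}$ of (\ref{limit_fun}). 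The main obstacle lies precisely in this bookkeeping: matching the Taylor coefficients of the polar decomposition to the three specific algebraic tensors appearing in $\mathcal{I}_4$, and verifying that $\p$ as defined by (\ref{def_p}) is exactly what is needed to render the $x_3$-linear piece of the expansion expressible in the required tensor form.
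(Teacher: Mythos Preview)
Your overall strategy mirrors the paper's, but there is a genuine gap in step (iv). The rescaled strain $K^h = h^{-2}\bigl((\bar R^h_{\mathrm{loc}})^t\nabla u^h(x',hx_3)\, A^{-1} - \Id\bigr)$ is \emph{not} bounded in $L^2(\Omega^1)$. Writing $(R^h)^t\nabla u^h(x',hx_3) = Q_0 + hx_3 B_0 + h^2(R^h)^t Z^h$ with $Z^h$ bounded in $L^2$ by Corollary~\ref{lemma_estimate2}, a direct computation gives
\[
K^h \;=\; h^{-1}\,\mathcal{J} \;+\; \mathcal{G}^h, \qquad \mathcal{J}(x',x_3)=x_3\,A^{-1}Q_0^tB_0A^{-1}\in so(3),\quad \mathcal{G}^h=A^{-1}Q_0^t(R^h)^tZ^hA^{-1}.
\]
The skew field $\mathcal{J}$ is pointwise bounded but nonzero (unless $\nabla\b\equiv 0$), so $K^h$ blows up like $h^{-1}$. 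Hence your truncation cannot produce a $\tilde K^h$ agreeing with $K^h$ on a set of asymptotically full measure, and the naive expansion $W(\Id+h^2K^h)=\tfrac12\mathcal{Q}_3(h^2K^h)+o(|h^2K^h|^2)$ has remainder only $o(h^2)$, not $o(h^4)$; dividing by $h^4$ is illegitimate.

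The paper closes this gap by one further use of frame invariance: since $\mathcal{J}\in so(3)$,
\[
W\bigl(\Id+h\mathcal{J}+h^2\mathcal{G}^h\bigr)=W\bigl(e^{-h\mathcal{J}}(\Id+h\mathcal{J}+h^2\mathcal{G}^h)\bigr)=W\bigl(\Id+h^2(\mathcal{G}^h-\tfrac12\mathcal{J}^2)+o(h^2)\bigr).
\]
Only after this exponential (equivalently, polar) correction is the deviation from $\Id$ genuinely $O(h^2)$, the Taylor remainder $o(h^4)$, and the truncation / weak lower semicontinuity argument valid. The extra contribution $-\tfrac12\mathcal{J}^2=\tfrac{x_3^2}{2}A^{-1}B_0^tB_0A^{-1}$ is exactly what produces the $(\nabla\b)^t\nabla\b$ terms you list in your three-block decomposition; without the exponential step those terms have no source in your $K^h$.

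Two smaller points. First, no elliptic regularity is needed for $V\in W^{2,2}$: the paper simply observes $\nabla V=(SQ_0)_{3\times 2}$ with $S\in W^{1,2}$ the weak limit of $h^{-1}((\bar R^h)^tR^h-\Id)$ and $Q_0$ smooth, hence $\nabla V\in W^{1,2}$ directly. Second, the reduction $\mathcal{Q}_3\to\mathcal{Q}_{2,A}$ is performed once, on the combined $x_3$-polynomial $2\times 2$ block, yielding $\mathcal{Q}_{2,A}(x',\,I+x_3\,III+x_3^2\,II)$; your centred-quadratic reorganisation via $1,\,x_3,\,x_3^2-\tfrac1{12}$ is then the right device to split the $x_3$-integral, but it does not license three separate minimisations over the third row/column.
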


\begin{proof}

{\bf 1.}  Corollary \ref{lemma_estimate2} yields existence of $R^h\in W^{1,2}(\Omega, SO(3))$ such that
(\ref{estimate1}) and (\ref{estimate2}) hold with $Ch^4$ and $Ch^2$ in
their right hand sides, respectively. We rewrite these inequalities for the reader's convenience:
\begin{equation}\label{estimate3}
 \frac{1}{h}\int_{\Omega^h} \abs{\nabu(x)-R^h(x')\pare{Q_0(x')+x_3B_0(x')}}^2 \dx \leq  C  h^4  
\end{equation}
and:
\begin{equation}\label{estimate4}
 \int_{\Omega}\abs{\nabla R^h(x')}^2\dx' 
\leq C h^2.
\end{equation}
To prove the claimed convergence properties for (\ref{yh}), it is
natural  in view of (\ref{estimate3}) to set:
\begin{equation*}
\bar{R}^h = \proj{ \fint_{\Omega^h}\nabla u^h(x) Q_0(x')^{-1}\dx}.
\end{equation*}
This projection is well defined, because for every $x'\in\Omega$, in
view of (\ref{estimate3}):
\begin{equation*}
\begin{split}
\dist^2&\pare{\fint_{\Omega^h}\nabla u^h Q_0^{-1}\dx, SO(3)  }
\leq \abs{\fint_{\Omega^h}\nabla u^h Q_0^{-1}\dx - R^h(x')}^2  \\
& \qquad \qquad \leq C  \abs{ \fint_{\Omega^h} (\nabla u^hQ_0^{-1}-R^h) \dx}^2 
+ C \abs{\fint_{\Omega^h}R^h\dx - R^h(x')}^2 \\
& \qquad\qquad \leq  C  \abs{\fint_{\Omega^h} \left(\nabla u^h-R^h(Q_0+x_3B_0 )\right) Q_0^{-1}}^2 \dx 
+ C \abs{ R^h(x')-\fint_{\Omega}R^h}^2\\
& \qquad\qquad \leq  C \fint_{\Omega^h} |\nabla u^h-R^h(Q_0+x_3B_0 )|^2 \dx 
+  C |R^h(x')-\fint_{\Omega}R^h|^2\\
& \qquad\qquad \leq  C h^4 + C |R^h(x')-\fint_{\Omega}R^h|^2
\end{split}
\end{equation*}
Now, taking the average on $\Omega$, by the Poincar\'e-Wirtinger
inequality and (\ref{estimate4}), we get: 
\begin{equation*}
\dist^2\pare{\fint_{\Omega^h}\nabla u^h Q_0^{-1}\dx, SO(3)  } \leq
Ch^4 +C\int_\Omega |\nabla R^h|^2 \leq Ch^2,
\end{equation*}
which proves  that the average $\fint_{\Omega^h}\nabla u^h Q_0^{-1}\dx$ is close to $SO(3)$  and that:
\begin{equation}\label{pso}
|\fint_{\Omega^h}\nabla u^hQ_0^{-1}\dx -  \bar{R}^h|^2\leq Ch^2.
\end{equation}
Moreover:
\begin{equation}\label{RhRha}
\begin{split}
& \fint_{\Omega}|R^h-\bar{R}^h|^2\dx  =  \fint_{\Omega^h}|R^h-\bar{R}^h|^2\dx \\
& \leq C\fint_{\Omega^h} \left(|R^h-\fint_{\Omega}R^h |^2
 + |( \fint_{\Omega}R^h) - \fint_{\Omega^h}\nabla u^h Q_0^{-1} |^2 \right)
+ \fint_{\Omega^h} |\bar{R}^h  - \fint_{\Omega^h}\nabla u^hQ_0^{-1} |^2 \\ & 
\leq C\fint_{\Omega^h}|\nabla R^h|^2\dx + C\fint_{\Omega^h}|\nabla u^h -
R^h(Q_0 + x_3 B_0)|^2~\mbox{d}x + Ch^2 \leq Ch^2,
\end{split}
\end{equation}
 where the last estimate follows by (\ref{estimate3}), (\ref{estimate4}) and (\ref{pso}). 
 
Let now $c^h\in\mathbb{R}^3$ be such that $\int_{\Omega} V^h=0$ where
$V^h$ is defined as in (\ref{Vh_defa}). 
Denote by $\nabla_h y^h$ the matrix whose columns are given by
$\partial_1 y^h$, $\partial_2 y^h$ and $\partial_3 y^h/h$. Obviously:
\begin{equation}\label{yu}
\nabla_h y^h (x',x_3) =  (\bar{R}^h)^t\nabla u^h (x', hx_3).
\end{equation}
Observe that:
\begin{equation*}
\begin{split}
& \int_{\Omega^1}| \nabla_h y^h-Q_0 |^2\dx
 \leq C \fint_{\Omega^h} | \nabla u^h-\bar{R}^hQ_0 |^2\dx \\
& \qquad \leq C (\fint_{\Omega^h} |\nabla u^h-R^h(Q_0+x_3B_0)|^2 \dx
+\fint_{\Omega^h}|x_3R^hB_0|^2\dx + \fint_{\Omega^h}
|R^h-\bar{R}^h|^2\dx ) \leq C h^{2}
\end{split}
\end{equation*}
by (\ref{estimate3}) and  \eqref{RhRha}.
Therefore, $\nabla_h y^h$ converges in $L^2(\Omega^1)$ to
$Q_0$. Further, the sequence $\{y^h\}$ is bounded in $W^{1,2}(\Omega^1)$, 
by the choice of $c^h$. Passing to a subsequence we get that
$y^h$ converges weakly in $W^{1,2}(\Omega^1)$ and in view of the
strong convergence of $\nabla y^h$ we have: 
\begin{equation*}
\begin{split}
y^h \rightarrow y_0 \quad \mbox{in}\quad W^{1,2}(\Omega^1,
\mathbb{R}^3) \quad \mbox{and} \quad
\frac{1}{h}\partial_3 y^h \rightarrow \b \quad \mbox{in}\quad L^2(\Omega^1, \mathbb{R}^3).
\end{split}
\end{equation*}

\medskip

{\bf 2.} Note that, for every $x'\in\Omega$:
\begin{equation}\label{nablaVa}
\begin{split}
& \nabla V^h(x') = \frac{1}{h} \pare{ \fint_{-1/2}^{1/2} \nabla_h y^h(x)-Q_0(x')\dxt  }_{3\times 2} \\
& = \frac{1}{h} \pare{  \fint_{-1/2}^{1/2} \nabla_h y^h-(\bar{R}^h)^tR^h(Q_0+hx_3B_0)\dxt  }_{3\times 2}
+ \frac{1}{h} \pare{  ((\bar{R}^h)^tR^h-\Id)Q_0  }_{3\times 2} \\
& = I_1^h+I_2^h.
\end{split}
\end{equation}
The first term above converges to $0$. Indeed:
\begin{equation}\label{est_I1a}
\begin{split}
\norm{I_1^h}_{L^2(\Omega)}^2 &\leq \frac{C}{h^2} \fint_{\Omega^1}|
(\bar{R}^h)^t\nabla u^h(x', hx_3)-
(\bar{R}^h)^tR^h(Q_0(x')+hx_3B_0)|^2 \dx \\ 
&\leq \frac{C}{h^2} \fint_{\Omega^h} | \nabla u^h(x', x_3)-R^h(Q_0+x_3B_0) |^2\dx
\leq Ch^2.
\end{split}
\end{equation}
Towards estimating the second term in (\ref{nablaVa}), denote: 
\begin{equation*}
S^h=\frac{1}{h}((\bar{R}^h)^tR^h-\Id).
\end{equation*}
By \eqref{RhRha} and \eqref{estimate4}, it follows that:
\begin{equation*}
\lVert S^h\rVert_{L^2(\Omega)}^2 \leq \frac{C}{h^2} \int_{\Omega}|R^h-\bar{R}^h|^2
\leq C \quad \mbox{and} \quad 
\lVert\nabla S^h\rVert_{L^2(\Omega)}^2\leq \frac{C}{h^2}\int_{\Omega} |\nabla R^h|^2\leq C.
\end{equation*}
Passing to a subsequence, we can assume that:
\begin{equation}\label{conv_deba}
S^h \rightharpoonup S \quad \mbox{ weakly in } W^{1,2}(\Omega),
\end{equation}
which implies:
\begin{equation}\label{S_conva}
I_2^h \rightarrow \pare{SQ_0}_{3\times 2} \quad \mbox{in } 
L^2(\Omega, \mathbb{R}^{3\times 2}).
\end{equation}
Consequently, by \eqref{nablaVa}:
\begin{equation}\label{SQa}
\nabla V^h \rightarrow \pare{SQ_0}_{3\times 2} \quad \mbox{in }  L^2(\Omega,
\mathbb{R}^{3\times 2}).
\end{equation}
As before, we conclude that $V^h$ converges in $W^{1,2}(\Omega)$ and that 
its limit $V$ belongs to $W^{2,2}(\Omega, \mathbb{R}^3)$, since
$\nabla V = (SQ_0)_{3\times 2}\in  W^{1,2}(\Omega)$.  
We now prove (\ref{sym0a}). By definition of $S^h$:
\begin{equation}\label{syma}
\sym{S^h}=-\frac{h}{2} (S^h)^tS^h ,
\end{equation}
so in view of the boundedness of $\{S^h\}$ in $W^{1,2}$:
\begin{equation*}
\rVert\sym{S^h}\lVert_{L^2(\Omega)} \leq Ch \rVert S^h\lVert_{L^4(\Omega)}^2
\leq C h \rVert S^h\lVert_{W^{1,2}(\Omega)}^2 \leq C h.
\end{equation*}
Consequently, $S$ is a skew symmetric field. But $(\nabla y_0)^t\nabla V
= (Q_0^t S Q_0)_{2\times 2}$, hence (\ref{sym0a}) follows. 

For future use, let us define $\vec p \in W^{1,2}(\Omega, \mathbb{R}^3)$ by: 
\begin{equation}\label{whatSanew}
[\nabla V ~|~ \vec p] =S Q_0.
\end{equation}
Since $Q_0^t  [\nabla V ~| ~p] \in so(3)$, it is easily checked that $\vec p$ is given solely in terms of $V$ by:
\begin{equation}\label{def_p2}
\begin{cases}
(\nabla y_0)^t \vec p =-(\nabla V)^t \b \\
\inn{\b}{\vec p} =0.
\end{cases}
\end{equation}

\medskip

{\bf 3.}  We now want to establish convergence in (iii). In view of \eqref{nablaVa} we write:
\begin{equation}\label{erra}
\begin{split}
\frac{1}{h}  \sym{ (Q_0^t\nabla V^h) }_{2 \times 2}(x') & = 
\frac{1}{h}  \sym{ (Q_0^t I_1^h)_{2\times 2}}  + \frac{1}{h}\sym{ \pare{Q_0^t S^h Q_0}_{2\times 2} } \\
& = J_1^h+J_2^h.
\end{split}
\end{equation}
We first deal with the sequence $J_2^h$. By
  \eqref{conv_deba}, $S^h\rightarrow S$ in $L^4(\Omega)$  and so
\eqref{syma} implies: 
\begin{equation*}
\frac{1}{h}\sym{S^h} \rightarrow -\frac{1}{2}{ S^tS} = \frac{1}{2}S^2
\quad \mbox{in } L^2(\Omega).
\end{equation*} 
Therefore:
\begin{equation}\label{Sa}
J_2^h \rightarrow -\frac{1}{2}\pare{Q_0^tS^tSQ_0}_{2\times 2}
= -\frac{1}{2}(\nabla V)^t\nabla V \quad \mbox{in} \quad L^2(\Omega).
\end{equation} 
We now prove that $J^h_1$ converges. Recall that by (\ref{erra}), (\ref{nablaVa}) and  (\ref{yu}):
\begin{equation}
 J_1^h = \frac{1}{h}  \sym{  (Q_0^t I_1^h)_{2\times 2}} = 
 \sym{ \pare{ Q_0^t (\bar{R}^h)^t \fint_{-1/2}^{1/2}Z^h(x',x_3)\dx_3}_{2\times 2} } 
\end{equation}
where the rescaled strains $Z^h$ are defined by:
\begin{equation}\label{Zh} 
Z^h(x',x_3)=\frac{1}{h^2} \left(\nabla u^h(x', hx_3)-R^h(x')(Q_0(x')+hx_3B_0(x'))\right).
\end{equation}
By (\ref{estimate3}), the sequence $\{Z^h\}$ is bounded in
$L^2(\Omega^1, \mathbb{R}^3)$. Therefore, up to a subsequence: 
\begin{equation}\label{conv_Za}
Z^h\rightharpoonup Z \quad \mbox{weakly in }   L^2(\Omega^1, \mathbb{R}^3).
\end{equation}
It follows that:
\begin{equation}\label{con2a}
J_1^h \rightharpoonup J_1:=  \sym{ \pare{ Q_0^t (\bar{R})^t \fint_{-1/2}^{1/2}Z(x',x_3)\dx_3}_{2\times 2} } 
\quad \mbox{weakly in }  L^2(\Omega).
\end{equation}
which yields (iii) by  (\ref{erra}) and (\ref{Sa}).

\medskip

{\bf 4.} We now aim at giving the structure of the weak limit
$\mathbb{S}$ of $\frac{1}{h} \sym{(Q_0^t\nabla V^h)}_{2 \times 2}$ in
terms of the limiting fields  $V$ and $Z$. We have just seen that:
\begin{equation}\label{S}
\mathbb{S}= J_1 -\frac{1}{2}(\nabla V)^t\nabla V, 
\end{equation}
where $J_1$ is given by (\ref{con2a}).
As a tool,  consider the difference quotients $f^{s,h}$:
\begin{equation*}
f^{s,h}(x', x_3)= \frac{1}{h^2s} \pare{ y^h(x', x_3+s)-y^h(x',x_3)-hs \pare{\b+h\big(x_3+\frac{s}{2}\big)\d}},
\end{equation*}
and let us study for any $s$ the convergence  of  $f^{s,h}$ as $h\to
0$. In fact, we will show that $f^{s,h} \rightharpoonup \vec p$, 
weakly in $W^{1,2}(\Omega^1,\mathbb{R}^3)$. Write:
\begin{equation*}
f^{s,h}(x', x_3)= \frac{1}{h^2} \fint_0^s\partial_3 y^h(x', x_3+t)-h(\b+h(x_3+t)\d) \,\mathrm{d}t,
\end{equation*}
and observe that:
\begin{equation*}
\begin{split}
\frac{1}{h^2}&\pare{ \partial_3 y^h-h(\b+hx_3\d) }
= \frac{1}{h} \pare{ (\bar{R}^h)^t\nabla u^h(x', hx_3)-(Q_0+hx_3B_0) }e_3 \\
&= \frac{1}{h} (\bar{R}^h)^t \pare{ \nabla u^h(x',hx_3)-R^h(Q_0+hx_3B_0) }e_3  +   S^h(Q_0+hx_3B_0)e_3\\
& = h (\bar{R}^h)^t  Z^h(x',x_3) e_3 +   S^h(Q_0+hx_3B_0)e_3.\\
\end{split}
\end{equation*}
The first term in the right hand side above converges to $0$ in
$L^2(\Omega^1)$ because $\{Z^h\}$ is bounded in $L^2(\Omega^1,
\mathbb{R}^3)$, while the second term converges to
$SQ_0e_3 = S\vec b_0$ in $L^2(\Omega^1)$ by  
\eqref{conv_deba}. Note that $SQ_0e_3=\vec p$ by \eqref{whatSanew}.  
Therefore, $f^{s,h} \rightarrow \p$ in $L^2(\Omega^1)$.

We now deal with the derivatives of the studied sequence. Firstly:
\begin{equation*}
\begin{split}
\partial_3 f^{s,h}(x', x_3) = & \frac{1}{s} \Big(
\frac{1}{h^2}\pare{ \partial_3 y^h(x', x_3+s)-h(\b+h(x_3+s)\d) } \\
& \qquad -\frac{1}{h^2}\pare{ \partial_3 y^h(x', x_3)-h(\b+hx_3\d) } \Big)\\
\end{split}
\end{equation*}
converges to $0$ in $L^2(\Omega^1)$.  For $i=1,2$, the in-plane derivatives read as:
\begin{equation*}
\begin{split}
\partial_i f^{s,h} (x', x_3) &= \frac{1}{h^2s} \Big(
  (\bar{R}^h)^t \partial_i  u^h(x', h(x_3+s)) \\ & \qquad\qquad - (\bar{R^h})^t \partial_i
  u^h(x',hx_3) -hs\pare{ \partial_i\b+h\pare{x_3+\frac{s}{2}}
  }\partial_i\d\Big)\\ 
& =\frac{1}{s}\pare{ (\bar{R}^h)^t Z^h(x', x_3+s)-(\bar{R^h})^t Z^h(x',x_3)  }e_i\\
& \quad + \frac{1}{h^2s}\pare{ (\bar{R}^h)^t R^h(Q_0+h(x_3+s)B_0) -(\bar{R}^h)^t R^h(Q_0+hx_3B_0) }e_i \\
& \quad -\frac{1}{h}\pare{B_0e_i+h\big(x_3+\frac{s}{2}\big)\partial_i \d}.
\end{split}
\end{equation*}
The last two terms above can be written as: $S^hB_0e_i-\pare{x_3+\frac{s}{2}}\partial_i\d$, hence by
\eqref{conv_Za}:
\begin{equation*}
\begin{split}
\partial_i f^{s,h}(x',x_3)\rightharpoonup 
& \frac{1}{s}(\bar{R})^t\Big( Z(x',x_3+s)-Z(x',x_3) \Big) e_i
\\ & \qquad\qquad + SB_0e_i -  \pare{x_3+\frac{s}{2}}\partial_i\d
\quad \mbox{weakly in }  L^2(\Omega^1, \mathbb{R}^3),
\end{split}
\end{equation*}
where $\bar R\in SO(3)$ is an accumulation point of the rotations
$\bar{R}^h$.

Consequently, $f^{s,h} \rightharpoonup \vec p$ weakly in $W^{1,2}(\Omega^1,\mathbb{R}^3)$ and, for $i=1,2$:
\begin{equation}\label{form-a}
s \partial_i \vec p =(\bar{R})^t\Big( Z(x',x_3+s)-Z(x',x_3) \Big) e_i
+ s SB_0e_i -s \pare{x_3+\frac{s}{2}}\partial_i\d, 
\end{equation}
which proves that $Z(x', \cdot) e_i$ has polynomial form and that: 
\begin{equation}\label{forma}
 \pare{ \bar{R}^tZ(x',x_3) }_{3\times 2}
=\pare{ \bar{R}^tZ(x',0) }_{3\times 2}
+ x_3 \pare{\nabla \vec p -  (SB_0 )_{3\times 2}}+ \frac{x_3^2}{2}\nabla\d.
\end{equation}

\medskip

By \eqref{conv_Za}, it follows that:
\begin{equation*}
J_1= \sym{\pare{ Q_0^t(\bar{R})^t Z(x',0) }_{2\times 2}}+\frac{1}{24}\sym{(Q_0^t\nabla\d)_{2\times 2}}.
\end{equation*}
With (\ref{S}),  we finally arrive at the following identity that
links $\mathbb{S}$ and $V$ and $Z$: 
\begin{equation}\label{Ba}
\begin{split}
\mathbb{S}(x') = \sym{\pare{ Q_0^t(\bar{R})^t Z(x',0) }_{2\times 2}}+\frac{1}{24}\sym{(Q_0^t\nabla\d)_{2\times 2}}
-\frac{1}{2}(\nabla V)^t\nabla V.
\end{split}
\end{equation}

\medskip

{\bf 5.} We now prove the lower bound in (iv). Recall that by (\ref{Zh}):
 \begin{equation*}
  \nabla u^h(x', hx_3) = R^h(x')(Q_0(x')+hx_3B_0(x')) + h^2 Z^h(x',x_3).
\end{equation*}
 Since $Q_0A^{-1}\in SO(3)$ we have:
\begin{equation*}
W(\nabla u^h A^{-1}) = W\big((Q_0A^{-1})^t(R^h)^t\nabla u^h A^{-1}\big)
= W\big(\Id +h\mathcal{J} +h^2\mathcal{G}^h\big),
\end{equation*}
where: 
\begin{equation*}
\mathcal{J}(x',x_3)=x_3A^{-1}(Q_0^tB_0)A^{-1} (x')\in so(3), \quad
\mathcal{G}^h(x',x_3) =  A^{-1}Q_0^t(R^h)^tZ^h(x',x_3)A^{-1}. 
\end{equation*} 
Note that by \eqref{conv_Za}: 
\begin{equation*}
\mathcal{G}^h(x',x_3) \rightharpoonup \mathcal{G}  = A^{-1}Q_0^t(\bar{R}^t)Z(x',x_3)A^{-1}
\quad \mbox{weakly in } L^{2}(\Omega^1, \mathbb{R}^{3\times 3}).
\end{equation*} 
Define the ``good sets'': 
$$\Omega_h=\{ x\in\Omega^1; ~ h|\mathcal{G}^h|<1 \}.$$ 
By the above, the characteristic functions $\car{\Omega_h}$ converge to $\car{}$ in $L^1(\Omega)$.
Further, by frame invariance and Taylor expanding $W$ on $\Omega_h$:
\begin{equation*}
\begin{split}
W\big(\Id + h\mathcal{J} + h^2\mathcal{G}^h\big)
&= W\big(e^{-h\mathcal{J}} (\Id + h\mathcal{J} +h^2\mathcal{G}^h)\big) \\
&= W(\Id+h^2(\mathcal{G}^h-\frac{1}{2}\mathcal{J}^2)+\smo{h^2})\\
& = \frac{1}{2}\mathcal{Q}_3\pare{h^2(\mathcal{G}^h-\frac{1}{2}\mathcal{J}^2)}+\smo{h^4}.
\end{split}
\end{equation*}
Therefore:
\begin{equation}\label{Q2a}
\begin{split}
\liminf_{h\to 0}\frac{1}{h^4}E^h(u^h)
& \geq \liminf_{h\to 0} \frac{1}{h^4} \int_{\Omega^1}\car{\Omega_h}
W\big(\Id + h\mathcal{J} +h^2\mathcal{G}^h \big) \dx \\ 
& = \liminf_{h\to
  0}\frac{1}{2}\int_{\Omega^1}\mathcal{Q}_3\Big(\car{\Omega_h}\sym{\Big(
  \mathcal{G}^h -\frac{1}{2}\mathcal{J}^2 }\Big)\Big)\dx \\ 
&\geq \frac{1}{2}\int_{\Omega^1}\mathcal{Q}_3\Big(\sym{\Big(
  \mathcal{G} -\frac{1}{2}\mathcal{J}^2 \Big)}\Big)\dx, 
\end{split}
\end{equation}
by the weak sequential lower semi-continuity of the quadratic form
$\mathcal{Q}_3$ in $L^2$ and in view of:
$$\car{\Omega^h}\sym{\Big(\mathcal{G}^h
  -\frac{1}{2}\mathcal{J}^2\Big)}\rightharpoonup \sym{\mathcal{G}}
-\frac{1}{2}\mathcal{J}^2 \quad \mbox{weakly in } L^2(\Omega^1).$$
Note that by \eqref{whatSanew}  we have:  $\pare{Q_0^tSB_0}_{2\times
  2}=-(\nabla V)^t\nabla \b$ and that:
$$\mathcal{J}^2=-\mathcal{J}^t\mathcal{J}=-x_3^2A^{-1}B_0^tB_0A^{-1}.$$ 
Therefore, using \eqref{forma}, the right hand side of \eqref{Q2a} is
bounded below by:
\begin{equation*}\label{longa}
\begin{split} 
& \frac{1}{2} \int_{\Omega^1} \mathcal{Q}_{2,A} \Big(x', \mbox{sym} 
\Big( Q_0^t(\bar{R})^t Z(x',0)
+ x_3\big(Q_0^t\nabla \vec p + (\nabla V)^t\nabla\b \big)
+
\frac{x_3^2}{2}\big(Q_0^t\nabla\d+(\nabla\b)^t\nabla\b\big)\Big)_{2\times
  2} \Big)~\mbox{d}x\\ & \qquad\qquad = \frac{1}{2}\int_{\Omega^1}
\mathcal{Q}_{2,A}\Big(x', I(x')+x_3III(x')+x_3^2II(x')\Big)\dx.
\end{split}
\end{equation*}
Above we used \eqref{Ba} and we denoted:
\begin{equation}\label{formy2}
\begin{split}
I(x')&=\mathbb{S}-\frac{1}{24}\sym{((\nabla y_0)^t\nabla\d) + \frac{1}{2} (\nabla V)^t\nabla V }\\
II(x')&=\frac{1}{2}\sym{ ((\nabla y_0)^t\nabla \d)+\frac{1}{2}(\nabla\b)^t\nabla\b} \\
III(x')&=\sym ((\nabla y_0)^t\nabla\vec p) +\sym((\nabla V)^t\nabla\b).
\end{split}
\end{equation}
Let $\mathcal{L}_{2, A}(x')$ be the symmetric bilinear form generating
the quadratic form $\mathcal{Q}_{2, A}(x')$.
Since the odd powers of $x_3$ integrate to $0$ on the symmetric
interval $(-1/2, 1/2)$, we get:
\begin{equation*}
\begin{split}
\int_{\Omega^1} & \mathcal{Q}_{2,A}\Big(x',
I(x')+x_3III(x')+x_3^2II(x')\Big)\dx \\ & = 
\int_{\Omega}
\mathcal{Q}_{2,A}(x', I(x'))~\mbox{d}x' + (\int_{-1/2}^{1/2} x_3^2
~\mbox{d}x_3) \int_\Omega Q_{2, A}(x', III(x'))~\mbox{d}x' \\ & \quad +  (\int_{-1/2}^{1/2} x_3^4
~\mbox{d}x_3) \int_\Omega Q_{2, A}(x', II(x'))~\mbox{d}x' +  2 (\int_{-1/2}^{1/2} x_3^2
~\mbox{d}x_3) \int_\Omega \mathcal{L}_{2, A}(x', I(x'),
II(x'))~\mbox{d}x' \\ & = \int_{\Omega}
\mathcal{Q}_{2,A}(x', I) + \frac{1}{12}\int_\Omega
Q_{2, A}(x', III) +  \frac{1}{80}
\int_\Omega Q_{2, A}(x', II) +  \frac{2}{12}\int_\Omega
\mathcal{L}_{2, A}(x', I,  II)~\mbox{d}x' \\ & = \int_{\Omega}
\mathcal{Q}_{2,A}\Big(x', I + \frac{1}{12} II \Big)~\mbox{d}x' + \frac{1}{12}\int_\Omega
Q_{2, A}(x', III)~\mbox{d}x' + \frac{1}{180}
\int_\Omega Q_{2, A}(x', II)~\mbox{d}x' \\ &
= \mathcal{I}_4(V, \mathbb{S}),
\end{split}
\end{equation*}
by a direct calculation. This completes the proof of Theorem
\ref{lowerless4} in view of (\ref{Q2a}).
\end{proof}

\section{The upper bound}

We now complete the proof of $\mathcal{I}_4$ being the $\Gamma$-limit
of $h^{-4}E^h$, by proving that the lower bound (\ref{limit_fun}) is optimal. 

\begin{theorem}\label{upperless4}
Let $V\in W^{2,2}(\Omega, \mathbb{R}^3)$ and $\mathbb{S}\in
L^2(\Omega, \mathbb{R}_{sym}^{2\times 2})$ satisfy:
\begin{equation}\label{finst}
\begin{split}
& \sym \big((\nabla y_0)^t \nabla V\big)=0,\\
& \mathbb{S}\in \mathcal{S} := \mathrm{cl}_{L^2}\big\{ \sym{((\nabla y_0)^t
  \nabla w)}; ~ w \in W^{1,2}(\Omega, \mathbb{R}^3) \big\}.
\end{split}
\end{equation}
Then there exists a sequence $u^h\in W^{1, 2}(\Omega^h, \mathbb{R}^3)$
such that assertions (i), (ii) and (iii) of Theorem \ref{lowerless4}
are satisfied with $R^h=\mathrm{Id}$ and $c^h=0$, and:
\begin{equation}
\limsup_{h\to 0} \frac{1}{h^4} E^h(u^h) \leq \mathcal{I}_4(V,\mathbb{S}).
\end{equation}
\end{theorem}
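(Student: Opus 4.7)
My plan is to construct an explicit recovery sequence $u^h$, polynomial in $x_3$ with $h$-dependent coefficients, extending the base deformation $y_0 + x_3 \vec b_0 + \frac{x_3^2}{2}\vec d_0$ from Lemma \ref{scalingh4} by order-$h$ and order-$h^2$ in-plane corrections together with thickness-direction correctors that realize pointwise the minimum in the definition (\ref{Q2A}) of $\mathcal{Q}_{2,A}$. The three integrals of $\mathcal{I}_4$ will come respectively from: (a) the $h^2$ in-plane perturbation $h^2 w$ with $\sym((\nabla y_0)^t\nabla w) = \mathbb{S}$ (first, stretching integral), (b) the $h$-order piece $h(V + x_3\vec p)$ (second, bending integral), and (c) the $x_3^2\vec d_0/2$ correction in the base deformation already identified in Lemma \ref{scalingh4} (third, Riemann integral).

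By density and a diagonal extraction it suffices to treat smooth data. I approximate $\mathbb{S}$ in $L^2(\Omega)$ by $\mathbb{S}_n = \sym((\nabla y_0)^t\nabla w_n)$ with $w_n\in\mathcal{C}^\infty(\bar\Omega,\mathbb{R}^3)$, and $V$ in $W^{2,2}(\Omega)$ by smooth first-order infinitesimal isometries $V_n$ (the associated $\vec p_n$ from (\ref{def_p}) inherits smoothness); the upper bound established for each smooth pair together with lower semi-continuity of $\mathcal{I}_4$ (immediate from its quadratic structure) closes the argument. I then set
\begin{equation*}
u^h(x', x_3) = y_0(x') + x_3 \vec b_0(x') + \tfrac{x_3^2}{2}\vec d_0(x') + h\big(V(x') + x_3 \vec p(x')\big) + h^2 w(x') + h^2 \Psi^h(x', x_3),
\end{equation*}
where $\Psi^h(x',x_3) = x_3\vec\alpha(x') + \tfrac{x_3^2}{2}\vec\beta(x') + \tfrac{x_3^3}{6}\vec\gamma(x')$ is a smooth polynomial corrector. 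The fields $\vec\alpha,\vec\beta,\vec\gamma:\Omega\to\mathbb{R}^3$ are chosen so that, at each power of $x_3$ appearing in the expansion of $(Q_0 A^{-1})^t \nabla u^h A^{-1}$, the resulting $3\times 3$ symmetric strain matches the $\tilde F$ achieving the minimum in (\ref{Q2A}) for the corresponding $2\times 2$ tensor in $\mathcal{I}_4$. Since $\tilde F \mapsto \mathcal{Q}_3(A^{-1}\tilde F A^{-1})$ is a coercive quadratic form, the argmin with prescribed $2\times 2$ minor depends linearly and smoothly on that minor, so $\vec\alpha,\vec\beta,\vec\gamma$ are well-defined and independent of $h$.

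The energy computation proceeds as in Lemma \ref{scalingh4}. Letting $\tilde F^h = (Q_0 A^{-1})^t \nabla u^h A^{-1}$, one expands
\begin{equation*}
\tilde F^h = \Id + \mathcal{J}_1 + \mathcal{J}_2 + \sym \mathcal{N}^h + o(h^2 + x_3^2),
\end{equation*}
where $\mathcal{J}_1 = x_3 A^{-1}(Q_0^t B_0) A^{-1}$ and $\mathcal{J}_2 = h\, A^{-1}(Q_0^t S Q_0) A^{-1}$ are both skew (skewness of $Q_0^t B_0$ follows from (\ref{skew}), and of $Q_0^t S Q_0$ from $S\in so(3)$ via (\ref{whatSanew})), and $\sym \mathcal{N}^h$ is the collected symmetric strain coming from the cross terms $\mathcal{J}_1\mathcal{J}_2, \mathcal{J}_i^2$ and from the quadratic/cubic-in-$x_3$ and order-$h^2$ contributions. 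Factoring out the rotation $\exp(\mathcal{J}_1 + \mathcal{J}_2)\in SO(3)$ via frame invariance and Taylor-expanding $W$ about $\Id$ using (\ref{properties}) yields
\begin{equation*}
\frac{1}{h^4} W(\nabla u^h A^{-1})(x', x_3) = \frac{1}{2}\mathcal{Q}_{2,A}\Big(x',\ I(x') + (x_3/h)\,III(x') + (x_3/h)^2\,II(x')\Big) + o(1),
\end{equation*}
with $I,II,III$ as in (\ref{formy2}); the choice of $\vec\alpha,\vec\beta,\vec\gamma$ collapses $\mathcal{Q}_3$ on full $3\times 3$ symmetric matrices to $\mathcal{Q}_{2,A}$ on their $2\times 2$ minors. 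Integrating over $x_3\in(-h/2, h/2)$, substituting $t=x_3/h$, and using that odd powers of $t$ vanish while $\int_{-1/2}^{1/2} t^2\,dt = 1/12$ and $\int_{-1/2}^{1/2} t^4\,dt = 1/80$, the same algebraic recombination used at the end of the lower-bound proof reassembles precisely $\mathcal{I}_4(V,\mathbb{S})$, yielding $\limsup_h h^{-4}E^h(u^h)\le \mathcal{I}_4(V,\mathbb{S})$.

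The main obstacle is the careful orchestration of skew-symmetric cancellations at two separate orders: both the $x_3$-linear contribution $\mathcal{J}_1$ and the $h$-linear contribution $\mathcal{J}_2$ to $\tilde F^h$ must be extracted as rotations before Taylor expansion, since otherwise spurious terms of orders $hx_3$, $h^2$ or $x_3^2$ in the symmetric strain would contaminate the $h^4$-scaled energy. This rests on (\ref{skew}) for $\mathcal{J}_1$ and on the explicit form (\ref{whatSanew}) of $[\nabla V | \vec p] = SQ_0$ with $S\in so(3)$ for $\mathcal{J}_2$. Once this is in place, no remainder survives in the Taylor expansion after division by $h^4$, thanks to smoothness of all data. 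Verification of items (i)-(iii) of Theorem \ref{lowerless4} with $R^h=\Id$ and $c^h = 0$ is elementary from the explicit polynomial form of $u^h$: the rescaled $y^h(x',x_3)=u^h(x', hx_3)$, the displacement $V^h$ from (\ref{Vh_defa}), and the scaled strain $h^{-1}\sym((\nabla y_0)^t \nabla V^h)$ converge as required by direct computation.
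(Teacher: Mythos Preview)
Your proposal has two genuine gaps.

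\medskip

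\textbf{First, and more seriously:} you reduce to smooth data by approximating $V$ in $W^{2,2}$ by \emph{smooth first-order infinitesimal isometries} $V_n$. This density is not available in general. Whether $\mathcal{C}^\infty\cap\{\sym((\nabla y_0)^t\nabla V)=0\}$ is $W^{2,2}$-dense in the space of $W^{2,2}$ infinitesimal isometries on a given surface $y_0(\Omega)$ is an open problem outside special geometries (see the references \cite{lemopa3}, \cite{holepa} in the paper). The paper's proof is designed precisely to avoid this question: it replaces $V$ by a Lusin--type truncation $v^h\in W^{2,\infty}$ satisfying $v^h\to V$ in $W^{2,2}$, $h\|v^h\|_{W^{2,\infty}}\le\epsilon_0$, and $h^{-2}|\{v^h\neq V\}|\to 0$, \emph{without} requiring $v^h$ to be an infinitesimal isometry. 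Skewness of $Q_0^t[\nabla v^h\,|\,\vec p^h]$ is then only used on the set $\{v^h=V\}$, and the complementary bad set is handled by the measure bound. Your diagonal argument cannot be carried out as stated. (A smaller slip: you invoke ``lower semi-continuity of $\mathcal{I}_4$'' to close the diagonalization, but you need continuity along strong $W^{2,2}\times L^2$ approximants, which is the opposite inequality.)

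\medskip

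\textbf{Second:} your thickness correctors are at the wrong scale. With $x_3\in(-h/2,h/2)$ and $\Psi^h=x_3\vec\alpha+\tfrac{x_3^2}{2}\vec\beta+\tfrac{x_3^3}{6}\vec\gamma$ multiplied globally by $h^2$, the third column of $\nabla u^h$ receives, after the rescaling $x_3\mapsto hx_3$, the contributions $h^2\vec\alpha+h^3x_3\vec\beta+\tfrac{h^4x_3^2}{2}\vec\gamma$. Only the first of these lands at order $h^2$; the others are $o(h^2)$ and vanish in the limit. Consequently you have no freedom to optimize the $e_3$-row/column of the symmetric strain at the $x_3$ and $x_3^2$ levels, and your expansion yields $\mathcal{Q}_3$ of a fixed $3\times 3$ completion of $III$ and $II$ rather than the minimized $\mathcal{Q}_{2,A}$. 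Compare with the paper: the correctors enter as $h^2x_3\vec q^h$, $\tfrac{hx_3^2}{2}\vec r^h$, and $\tfrac{x_3^3}{6}\vec k_0$, i.e. with prefactors $h^2$, $h$, and $1$ respectively, so that after $x_3\mapsto hx_3$ all three land at order $h^2$ with the correct polynomial dependence $1$, $x_3$, $x_3^2$ in the third column. Your $\vec\beta,\vec\gamma$ would need to blow up like $h^{-1},h^{-2}$ to compensate, contradicting your claim that they are $h$-independent.
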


\begin{proof}
In the construction below, we will use the following notation.
In view of (\ref{Q2A}), for every $F_{2\times 2}\in
\mathbb{R}^{2\times 2}$ one can write:
\begin{equation}\label{Q2min}
\begin{split}
\mathcal{Q}_{2, A}(x', F_{2\times 2})
=\min_{c\in\mathbb{R}^3}  \Big\{ \mathcal{Q}_3\big(A^{-1}( F^*_{2\times 2}+\sym (c\otimes e_3) )A^{-1}\big)  \Big\},
\end{split}
\end{equation}
where $F_{2\times 2}^*$ denotes the $\mathbb{R}^{3\times 3}$ matrix
whose principal $2\times 2$ minor equals $F_{2\times 2}$.
We will denote by $c(x', F_{2\times 2})$ the unique minimizer in 
\eqref{Q2min}.   Note that $c(x', \cdot)$ is a linear function of
$F_{2\times 2}$ and it depends only on its symmetric part $(\sym F_{2\times 2})$.

\medskip

{\bf 1.}  Since $\mathbb{S}\in\mathcal{{S}}$, there exists a
sequence $w^h\in W^{1,2}(\Omega, \mathbb{R}^3)$ such that: 
\begin{equation}\label{conveB}
\sym \big((\nabla y_0)^t\nabla(w^h+\frac{1}{24}\d)\big) \rightarrow \mathbb{S}
\quad \mbox{in } L^2(\Omega, \mathbb{R}^{2\times 2})
\end{equation} 
and without loss of generality we can assume that each $w^h$ is smooth up to
the boundary of $\Omega$, together with:
\begin{equation}\label{stimaw2}
\lim_{h\to 0} \sqrt{h} \lVert w^h \rVert_{W^{2,\infty}} = 0.
\end{equation}
Fix a small $\epsilon_0\in (0, 1)$ and 
let $v^h\in W^{2,\infty}(\Omega, \mathbb{R}^3)$ be a sequence of
Lipschitz deformations with the properties:
\begin{equation}\label{measure}
\begin{split}
v^h&\rightarrow V \quad \mbox{in } W^{2,2}(\Omega, \mathbb{R}^3), \\
h&\lVert v^h\rVert_{W^{2, \infty}} \leq \epsilon_0, \\
\lim_{h\to 0}\frac{1}{h^2} &\big| \big\{ x'\in\Omega; ~ v^h(x')\neq V(x') \big\} \big| = 0.
\end{split}
\end{equation}
We refer to \cite{liu} and \cite{FJMhier} for the construction of such
truncated sequence $v^h$. Define now $\ph\in W^{1,\infty}(\Omega,
\mathbb{R}^3)$ by:
\begin{equation}
\ph= (Q_0^t)^{-1}
\left[\begin{array}{c}
-(\nabla v^h)^t\b \\  0
\end{array}\right],
\end{equation}
and also define the fields  $\qh\in W^{1,\infty}(\Omega, \mathbb{R}^3)$,
$\k$ smooth and $\tilde r^h\in L^\infty(\Omega, \mathbb{R}^3)$ such that:
\begin{equation*}
\begin{split}
& Q_0^t\qh=\frac{1}{2}c\big(x', 2 (\nabla y_0)^t\nabla w^h + (\nabla v^h)^t\nabla v^h\big) -
\left[\begin{array}{c} (\nabla w^h)^t\b \\ 0 \end{array}\right]
-\left[\begin{array}{c}
(\nabla v^h)^t\ph \\ 
\frac{1}{2}|\ph|^2
\end{array}\right],\\ 
& Q_0^t\vec k_0=c\big(x', (\nabla y_0)^t\nabla \d + (\nabla \b)^t\nabla \b\big)
-\left[\begin{array}{c}
(\nabla \b)^t\d \\  |\d|^2 \end{array}\right],\\
& Q_0^t\tilde{r}^h =c\big(x', (\nabla y_0)^t\nabla \ph + (\nabla v^h)^t\nabla \b\big)
-\left[\begin{array}{c} (\nabla v^h)^t\d \\  \langle \ph, \d\rangle \end{array}\right].
\end{split}
\end{equation*}
Finally, let $\rh\in W^{1,\infty}(\Omega, \mathbb{R}^3)$ be such that:
\begin{equation}\label{stimarh}
\begin{split}
 \lim_{h\to 0} \lVert \rh -\tilde{r}^h \rVert_{L^2} = 0,  \qquad 
\lim_{h\to 0} \sqrt{h}~\lVert \rh \rVert_{W^{1,\infty}} = 0.
\end{split}
\end{equation}
It follows from the definition of the minimizing map $c$, that:
\begin{equation}\label{pomoc3}
\begin{split}
&\mathcal{Q}_3\Big(A^{-1} \big( 2 Q_0^t[\nabla w^h \;|\; \qh]  + [\nabla
v^h \;|\;\ph ]^t[\nabla v^h \;|\;\ph]  \big) A^{-1}\Big) \\
&\qquad\qquad\qquad \qquad\qquad\qquad\qquad\qquad 
= \mathcal{Q}_{2,A}\Big(x', 2 (\nabla y_0)^t\nabla w^h + (\nabla v^h)^t\nabla v^h\Big),\\
& \mathcal{Q}_3\Big(A^{-1} \big( Q_0^t[\nabla\d \;|\; \k]  + [\nabla \b \;|\;\d ]^t[\nabla \b \;|\;\d] \big)A^{-1}\Big) \\
& \qquad\qquad\qquad \qquad\qquad\qquad\qquad\qquad 
=\mathcal{Q}_{2,A}\Big(x', (\nabla y_0)^t\nabla \d +(\nabla \b)^t\nabla\b\Big), \\
& \mathcal{Q}_3\Big( A^{-1}\big( 2 Q_0^t[ \nabla \ph \;|\; \tilde{r}^h
] + 2 [\nabla v^h \;|\; \ph]^t[\nabla \b \;|\; \d]^t  \big)A^{-1} \Big)\\
& \qquad\qquad\qquad \qquad\qquad\qquad\qquad\qquad 
=\mathcal{Q}_{2,A}\Big(x', (\nabla y_0)^t\nabla \ph + (\nabla v^h)^t\nabla\b \Big).
\end{split}
\end{equation}
Moreover, we have the following pointwise bounds: 
\begin{equation}\label{stimequanti}
\begin{split}
|\ph|&\leq C|\nabla v^h|, \\
|\nabla \ph|&\leq C(|\nabla v^h|+|\nabla^2 v^h|),\\
|\qh|&\leq C(|\nabla w^h|+|\nabla v^h|^2+|\nabla v^h||\ph|+|\ph|^2) 
   \leq C(|\nabla w^h|+|\nabla v^h|^2), \\
|\nabla \qh|  &\leq C(|\nabla w^h|+|\nabla^2 w^h|+|\nabla^2 v^h||\nabla v^h|+|\nabla v^h|^2).
\end{split}
\end{equation}

\medskip

{\bf 2.} Consider the sequence $u^h\in W^{1,\infty}(\Omega^h, \mathbb{R}^3)$ defined as:
\begin{equation*}
\begin{split}
u^h(x', x_3)&=y_0(x')+hv^h(x')+h^2w^h(x')+x_3\b(x')+\frac{x_3^2}{2}\d(x') \\
&\quad+\frac{x_3^3}{6}\k(x')+hx_3\ph(x')+h^2x_3\qh(x')+\frac{hx_3^2}{2}\rh(x').
\end{split}
\end{equation*}
For every $(x', x_3)\in\Omega^1$ we write:
\begin{equation*}
\nabla u^h(x', hx_3) = Q_0(x') +  Z_1^h(x', x_3) + Z_2^h(x', x_3),
\end{equation*}
where:
\begin{equation*}
\begin{split}
Z^h_1(x', x_3) & = h[\nabla v^h \;|\; \ph] + h^2[ \nabla w^h \;|\;
\qh] + hx_3[ \nabla \b \;|\; \d] + \frac{h^2x_3^2}{2}[\nabla \d \;|\; \k]
+h^2x_3[\nabla\ph \;|\; \rh], \\
Z^h_2(x', x_3) & = \frac{h^3x_3^3}{6}[\nabla\k \;|\; 0]
+ h^3x_3[ \nabla\qh \;|\; 0] + \frac{h^3x_3}{2}[ \nabla \rh\;|\; 0].
\end{split}
\end{equation*}
Since $Q_0A^{-1} \in SO(3)$, we get:
\begin{equation*}
\nabla u^hA^{-1}(x', hx_3) = Q_0A^{-1}\Big(\Id+A^{-1}Q_0^tZ_1^hA^{-1}+A^{-1}Q_0^tZ_2^hA^{-1}\Big)
\end{equation*}
and, in view of \eqref{measure}, \eqref{stimarh} and
\eqref{stimequanti}, there follows for $h$ sufficiently small:
\begin{equation*}
\begin{split}
&\lVert A^{-1}Q_0^tZ_1^hA^{-1}+A^{-1}Q_0^tZ_2^hA^{-1} \rVert_{L^{\infty}}\\
&\qquad\qquad\leq C\Big( h\|\nabla v^h\|_{L^{\infty}} +h\|\ph \|_{L^{\infty}}
+h^2\|\nabla w^h\|_{L^{\infty}}+h^2\|\qh\|_{L^{\infty}} 
+ h\|\nabla\b\|_{L^{\infty}}  + h\|\d\|_{L^{\infty}} \\ &
\qquad\qquad\qquad\qquad  + h^2\|\nabla\d\|_{L^{\infty}} 
+ h^2\|\k\|_{L^{\infty}} + h^2\|\nabla \ph\|_{L^{\infty}}
+ h^2\|\rh\|_{L^{\infty}} +h^3\|  \nabla\k \|_{L^{\infty}} \\ &
\qquad\qquad\qquad\qquad + h^3
\|\nabla \qh \|_{L^{\infty}} + h^3 \| \nabla\rh \|_{L^{\infty}}\Big) 
\leq C\epsilon_0.
\end{split}
\end{equation*}
By the left polar decomposition, there exists a further rotation $R\in SO(3)$ such that:
\begin{equation*}
\begin{split}
R\nabla u^hA^{-1}
& =  \Big((\Id+A^{-1}Q_0^tZ_1^hA^{-1}+A^{-1}Q_0^tZ_2^hA^{-1})^t
(\Id+A^{-1}Q_0^tZ_1^hA^{-1}+A^{-1}Q_0^tZ_2^hA^{-1})  \Big)^{1/2} \\ 
&= \Big( \Id+2A^{-1}\sym(Q_0^tZ_1^h)A^{-1}+A^{-1}(Z_1^h)^tZ_1^hA^{-1}+\bigo{|Z_2^h|}  \Big)^{1/2} \\
& = \Id +A^{-1}\sym(Q_0^tZ_1^h)A^{-1} + \frac{1}{2}
A^{-1}(Z_1^h)^tZ_1^h A^{-1} \\
& \qquad\qquad\qquad\quad\qquad\qquad\qquad 
+ \mathcal{O}\Big(|\sym(Q_0^tZ_1^h)+(Z_1^h)^tZ_1^h |^2\Big)  +\bigo{|Z_2^h|}.
\end{split}
\end{equation*}

\medskip

{\bf 3.} Consider the set:
$$\Omega_h=\big\{ (x', x_3)\in\Omega; ~ v^h(x')=V(x') \big\}.$$ 
Note that on $\Omega_h$ we have: $\ph=\p$ and $Q_0^t[\nabla v^h\;|\;
\ph]\in so(3)$. Using Taylor's expansion, it follows that:
\begin{equation*}
\frac{1}{h^4}\int_{\Omega_h}W\big(\nabla u(x', hx_3) A^{-1}\big)\dx
=  \frac{1}{2h^4}\int_{\Omega_h}\mathcal{Q}_3\Big(A^{-1}
\big( Q_0^tZ_1^h + \frac{1}{2} (Z_1^h)^tZ_1^h \big)A^{-1}\Big)\dx + \mathcal{E}_1^h,
\end{equation*}
where the error term $\mathcal{E}_1^h$ can be estimated by:
\begin{equation*}
|\mathcal{E}_1^h|
\leq \frac{C}{h^4} \int_{\Omega_h} \big|2\sym(Q_0^tZ_1^h) +(Z_1^h)^tZ_1^h\big|^3
+ |Z_2^h|^2 + \big|2\sym(Q_0^tZ_1^h) +(Z_1^h)^tZ_1^h\big||Z_2^h| \dx.
\end{equation*} 
Now on $\Omega_h$ we also have, by \eqref{stimequanti}:
\begin{equation*}
\begin{split}
\big|2\sym(Q_0^tZ_1^h)+(Z_1^h)^tZ_1^h\big| 
& \leq C\Big( h^2|\nabla w^h|+h^2|\nabla v^h|^2+h^2+h^2|\nabla v^h|+h^2|\nabla^2 v^h|+h^2|\rh| \Big) ,\\
|Z_2^h| & \leq Ch^3\big(1+|\nabla \qh|+|\nabla \rh|\big) \\
 & \leq Ch^3\Big( 1+|\nabla w^h|+|\nabla^2 w^h|+|\nabla^2 v^h||\nabla v^h|+|\nabla v^h|^2+|\nabla \rh| \Big),
\end{split}
\end{equation*}
and therefore, in view of \eqref{stimaw2}, \eqref{stimarh},
\eqref{measure}  and $V\in W^{2,2}$:
\begin{equation*}
\begin{split}
&\frac{1}{h^4}\int_{\Omega_h}\big|2\sym(Q_0^tZ_1^h) +(Z_1^h)^tZ_1^h\big|^3\dx \\
&\quad\quad\quad\leq \frac{C}{h^4}\int_{\Omega_h} h^6|\nabla w^h|^3+h^6|\nabla v^h|^6+h^6
+h^6|\nabla v^h|^3+h^6|\nabla^2 v^h|^3+h^6|\rh|^3 \dx \\
&\quad\quad\quad\leq \frac{C}{h^4} \Big(h^2\lVert\nabla w^h\rVert_{L^\infty} (h^2\lVert \nabla w^h\rVert_{L^2})^2
+ h^6\lVert \nabla V\rVert_{L^6}^6 + h^6|\Omega| + h^6\lVert \nabla V\rVert_{L^3}^3\\
&\quad\quad\quad\quad\qquad 
+h^6\lVert \nabla^2 v^h\rVert_{L^\infty}\lVert\nabla^2 V
\rVert_{L^2}^2+(\sqrt{h}\lVert\rh \rVert_{L^\infty})^3h^{9/2} \Big)  
\to 0 \quad \mbox{as } h\to 0.
\end{split}
\end{equation*}
Analogously:
\begin{equation*}
\begin{split}
& \frac{1}{h^4}\int_{\Omega_h}|Z_2^h|^2\dx \leq \frac{C}{h^4} \int_{\Omega_h} h^5+(h\lVert \nabla
v^h\rVert_{L^\infty})^2h^4|\nabla^2 v^h|^2 +h^6|\nabla v^h|^4 \dx  
\to 0 \quad \mbox{as } h\to 0,\\
& \frac{1}{h^4}\int_{{\Omega}_h}\big|2\sym(Q_0^t)Z_1^h+(Z_1^h)^tZ_1^h\big||Z_2^h|\dx\\
&\qquad\quad\leq \frac{C}{h^4} \int_{\Omega_h}  \Big(h^5|\nabla
w^h|^2 + h^5|\nabla^2 w^h|^2 + h^5|\nabla v^h|^2+h^5+h^5|\nabla V|+h^5|\nabla^2 V|+h^5|\rh| \\ 
&\quad\quad\quad\quad\qquad\qquad + h^5|\nabla V|^2|\nabla^2
V|+h^5|\nabla V||\nabla^2 V|^2 \Big) \dx \leq C\epsilon_0.
\end{split}
\end{equation*}
We therefore conclude that:
\begin{equation}\label{blad}
\limsup_{h\to 0} |\mathcal{E}_1^h|\leq C\epsilon_0.
\end{equation}

\medskip

{\bf 4.} Consider now the error due to integrating on the residual subdomain:
\begin{equation*}
\begin{split}
\mathcal{E}_2^h=\frac{1}{h^4}\int_{\Omega^1 \setminus \Omega_h} W\Big(\nabla u^hA^{-1}(x', hx_3)\Big)\dx
\leq \frac{C}{h^4}\int_{\Omega^1 \setminus \Omega_h} \big|2\sym(Q_0^tZ_1^h)+(Z_1^h)^tZ_1^h\big|^2+|Z_2^h|^2 \dx.
\end{split}
\end{equation*}
Observe that, since the matrix field $[\nabla v^h\;|\;\ph]$ is Lipschitz, we have:
\begin{equation*}
\begin{split}
\Big|\sym(Q_0^t[\nabla v^h \;|\;\ph])(x')\Big|
& \leq C \lVert \nabla v^h\rVert_{W^{1, \infty}} \dist\big(x', \{ v^h=V \}\big)\\
& \leq \frac{C\epsilon_0}{h}\dist\big(x', \{ v^h=V \}\big)
\rightarrow 0 \quad \mbox{in } L^{\infty}(\Omega).
\end{split}
\end{equation*}
The last inequality above follows by a standard argument by contradiction.
If there was a sequence $x^h\in\Omega$ such
that $\dist(x^h,\{ v^h=V \} )\geq ch$, this would imply that: $|\{
x'; ~ v^h(x') \neq V(x') \}| \geq \big|\Omega \cap B(x^h, ch)\big| \geq ch^2$, contradicting \eqref{measure}.
Consequently, by \eqref{stimaw2}, \eqref{stimarh}, \eqref{measure}:
\begin{equation*}
\begin{split}
 |\mathcal{E}_2^h| & \leq 
\frac{C}{h^4} \int_{\Omega^1\setminus \Omega_h} h^2\big |\sym(Q_0^t[\nabla v^h\;|\; \ph]) \big |\dx \\
&\quad\quad \quad +\frac{C}{h^4}\int_{\Omega^1\setminus
  \Omega_h} h^4|\nabla w^h|^2+h^4|\nabla v^h|^4+h^4|\nabla^2
v^h|^2+h^4|\rh|^2+h^4 + h^6|\nabla v^h|^4 \dx\\ 
& \leq \frac{C}{h^4}o(h^2)|\Omega^1\setminus \Omega_h|
+\frac{C}{h^4}\sqrt{h}\lVert \nabla w^h
\rVert_{L^\infty}h^{7/2}|\Omega^1\setminus\mathcal{U}^h|^{1/2}\lVert
\nabla w^h \rVert_{L^2} \\ 
&\quad\quad +C|\Omega^1\setminus \mathcal{U}^h|\lVert \nabla v^h\rVert_{L^8}^4
+Ch\lVert \nabla^2 v^h\rVert_{L^\infty}\frac{1}{h}\lVert \nabla^2 v^h
\rVert_{L^2}|\Omega^1\setminus \mathcal{U}^h|^{1/2} 
+ \frac{1}{h}(\sqrt{h}\lVert \rh \rVert_{L^\infty})^2|\Omega^1\setminus \mathcal{U}^h| \\
&\quad\quad  + (h\lVert \nabla^2 v^h\rVert_{L^\infty})^2\lVert \nabla
v^h\rVert_{L^4}^2| \Omega^1\setminus \mathcal{U}^h |^{1/2} \qquad \to 0 \quad
\mbox{as } h\to 0. 
\end{split}
\end{equation*}
Thus:
\begin{equation*}
\limsup_{h\to 0} \frac{1}{h^4}E^h(u^h)
\leq \limsup_{h\to  0}\frac{1}{h^4}\int_{\Omega_h}\frac{1}{2}\mathcal{Q}_3\Big(
A^{-1}\big(\sym(Q_0^tZ_1^h) + \frac{1}{2} (Z_1^h)^tZ_1^h \big)
A^{-1}\Big)~\mbox{d}x+ C\epsilon_0. 
\end{equation*}

Now on $\Omega_h$ we have:
\begin{equation*}
\begin{split}
&2\sym(Q_0^tZ_1^h)+(Z_1^h)^tZ_1^h \\
&\qquad = 2h^2\Big(  \sym(Q_0^t[\nabla w^h\;|\;
\qh])+\frac{x_3^2}{2}\sym(Q_0^t[\nabla \d\;|\; \k])+x_3\sym(
Q_0^t[\nabla \p\;|\; \rh] ) \Big) \\ 
&\qquad\quad+ h^2\Big( [\nabla V\;|\; \p]^t[\nabla V\;|\;
\p]+x_3^2[\nabla \b\;|\; \d]^t[\nabla \b\;|\; \d]+2x_3\sym([\nabla
V\;|\; \p]^t[\nabla \b\;|\; \d]) \Big) + \mathcal{E}^h,
\end{split}
\end{equation*}
where the present error $\mathcal{E}^h$ is estimated by:
\begin{equation}\label{stimaorr}
\begin{split}
|\mathcal{E}^h|
& \leq C\Big( h^3|\nabla V||\nabla w^h|+h^3|\nabla V|+h^3|\nabla V||\nabla \p|+h^3|\nabla V||\rh| \\
& \qquad\qquad + h^4|\nabla w^h|^2+h^3|\nabla w^h|+h^4|\nabla w^h||\nabla \p +h^4|\nabla w^h||\rh|+h^3\\
& \qquad\qquad + h^3|\nabla \p|+h^3|\rh|+h^4+h^4|\nabla\p|+h^4|\rh|+h^4|\nabla \p|^2 +h^4|\rh|^2 \Big) \\
& \leq Ch^2 \Big(  o(1)\sqrt{h}|\nabla V| + \epsilon_0^2|\nabla^2V| + o(1)\sqrt{h} 
+ o(1)\epsilon_0\sqrt{h}  \Big).
\end{split}
\end{equation}
Consequently:
\begin{equation*}
\begin{split}
& \limsup_{h\to 0}\frac{1}{h^4} E^h(u^h) \\
& \qquad \leq \limsup_{h\to 0}\frac{1}{2}  \int_{\Omega_h}
\mathcal{Q}_3\Big( A^{-1}\big(\sym(Q_0^t[\nabla w^h\;|\; \qh]) +
\frac{1}{2} x_3^2\sym(Q_0^t[\nabla \d\;|\; \k]) \\ & \qquad \qquad \qquad\qquad\qquad\qquad\qquad
+ x_3\sym( Q_0^t[\nabla \p\;|\; \rh] )  + \frac{1}{2}
 [\nabla V\;|\; \p]^t[\nabla V\;|\; \p]\\
& \qquad \qquad \qquad\qquad + \frac{1}{2} 
x_3^2[\nabla \b\;|\; \d]^t[\nabla \b\;|\; \d] + x_3\sym([\nabla V\;|\;
\p]^t[\nabla \b\;|\; \d]) \big) A^{-1}\Big)~\mbox{d}x + C \epsilon_0 \\
& \qquad = \limsup_{h\to 0}\frac{1}{2} \int_{\Omega_h}
\mathcal{Q}_{3}\Big (A^{-1} \big( \sym(Q_0^t[\nabla w^h \;|\; \qh]) + \frac{1}{2}
[\nabla V \;|\; \p]^t[\nabla V \;|\; \p] \\ 
&\qquad\qquad\qquad\qquad \qquad\qquad \qquad
+ \frac{1}{2} x_3^2\sym(Q_0^t[\nabla \d \;|\; \k] )
+ \frac{1}{2} x_3^2[\nabla \b \;|\; \d]^t[\nabla \b \;|\; \d]  \big)A^{-1} \Big)\\
& \qquad\qquad\qquad  + \mathcal{Q}_3\Big(
A^{-1}\big(x_3\sym(Q_0^t[\nabla \p \;|\; \rh])
+ x_3\sym ([\nabla V \;|\; \p]^t[\nabla \b \;|\; \d] ) \big) A^{-1} \Big)\dx + C\epsilon_0.
\end{split}
\end{equation*}
Denoting by: $$I_1(x') = \sym((\nabla y_0)^t\nabla w^h) +
\frac{1}{2}(\nabla v^h)^t\nabla v^h, \qquad
I_2(x')=\frac{1}{2}\sym((\nabla y_0)^t\nabla \d)+(\nabla
\b)^t\nabla\b,$$ 
we have:
\begin{equation*}
\begin{split}
\mathcal{Q}_3\Big(&A^{-1}\big(I_1^*(x')+\sym(c(x', I_1(x'))\otimes
e_3) + x_3^2 I_2^*(x') + x_3^2\sym(c(x', I_2(x'))\otimes e_3)\big) A^{-1} \Big)\\ 
& = \mathcal{Q}_3\Big(A^{-1}\big( (I_1(x')+x_3^2I_2(x'))^*+\sym( c(x',
I_1(x')+x_3^2I_2(x'))\otimes e_3 )\big)A^{-1} \Big) \\ 
& = \mathcal{Q}_{2, A} \Big((I_1(x')+x_3^2I_2(x')\Big),
\end{split}
\end{equation*}
where we have used the definition and linearity of the minimizing map $c$.
Recalling the definitions of the curvature forms $I(x')$, $II(x')$ and $III(x')$ in
\eqref{formy2},  observe that $I_2(x')=2II(x')$ and that $\frac{1}{2}I_1$
converges to $I$ in $L^2$ by \eqref{conveB}.   
Hence:
\begin{equation*}
\begin{split}
\limsup_{h\to 0} \frac{1}{h^4}E^h(u^h)
&\leq \frac{1}{2}\int_{\Omega^1} \mathcal{Q}_{2,A}\Big(I(x')+x_3^2II(x')\Big)\dx 
+\frac{1}{2}\int_{\Omega^1}\mathcal{Q}_{2,A}\Big(x_3III(x')\Big)\dx  +C\epsilon_0 \\
&= \mathcal{I}_4(V,\mathbb{S})+ C\epsilon_0.
\end{split}
\end{equation*}
Since $\epsilon_0>0$ was arbitrary, the proof is achieved by a diagonal argument.
\end{proof}

\section{Discussion of the von K\'arm\'an-like functional (\ref{limit_fun})}\label{secexp}

Theorems \ref{lowerless4} and \ref{upperless4} imply, as
usual in the setting of $\Gamma$-convergence, convergence of
almost-minimizers:

\begin{corollary}
If $u^h\in W^{1,2}(\Omega^h, \mathbb{R}^3)$ is a minimizing sequence
to $h^{-4}E^h$, that is:
$$\lim_{h\to 0}\left(\frac{1}{h^4} E^h(u^h) -
  \inf\frac{1}{h^4}E^h\right) = 0,$$
then the appropriate renormalizations $y^h = (\bar R^h)^t u^h(x',
hx_3) - c^h \in
W^{1,2}(\Omega^1,\mathbb{R}^3) $ obey the convergence statements of
Theorem \ref{lowerless4} (i), (ii), (iii). The convergence of
$h^{-1}\mbox{sym} \big((\nabla y_0)^t\nabla V^h\big)$ to $\mathbb{S}$ in (iii) is
strong in $L^2(\Omega)$. Moreover, any limit $(V,\mathbb{S})$ minimizes the
functional $\mathcal{I}_4$. 
\end{corollary}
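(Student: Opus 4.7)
The plan is to combine the lower bound in Theorem \ref{lowerless4} with the recovery sequence supplied by Theorem \ref{upperless4} to identify $(V,\mathbb{S})$ as a minimizer, and to obtain the extra strong convergence in (iii) from the equality case in the lower bound.

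Lemma \ref{scalingh4} yields $\inf h^{-4}E^h\leq C$, so the almost-minimality hypothesis gives $E^h(u^h)\leq Ch^4$. Theorem \ref{lowerless4} then produces, along a subsequence, translations $c^h$, rotations $\bar R^h$, a pair $(V,\mathbb{S})$ satisfying \eqref{finst}, and the convergences (i), (ii), (iii) (the latter initially only weak), together with $\liminf_{h\to 0} h^{-4}E^h(u^h)\geq\mathcal{I}_4(V,\mathbb{S})$. For any admissible competitor $(\tilde V,\tilde{\mathbb{S}})$ satisfying \eqref{finst}, Theorem \ref{upperless4} furnishes a recovery sequence $\tilde u^h$ with $\limsup h^{-4}E^h(\tilde u^h)\leq\mathcal{I}_4(\tilde V,\tilde{\mathbb{S}})$. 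Since $E^h$ is invariant under rigid motions, so that the change of variables in \eqref{yh} preserves the scaled energy, one chains
$$\mathcal{I}_4(V,\mathbb{S})\leq\liminf_{h\to 0}h^{-4}E^h(u^h)=\lim_{h\to 0}\inf h^{-4}E^h\leq\limsup_{h\to 0}h^{-4}E^h(\tilde u^h)\leq\mathcal{I}_4(\tilde V,\tilde{\mathbb{S}}),$$
so $(V,\mathbb{S})$ minimizes $\mathcal{I}_4$; taking $(\tilde V,\tilde{\mathbb{S}})=(V,\mathbb{S})$ also yields $\lim_{h\to 0} h^{-4}E^h(u^h)=\mathcal{I}_4(V,\mathbb{S})$.

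For the strong convergence in (iii), I return to the derivation of the lower bound \eqref{Q2a}: the liminf inequality was obtained by applying weak lower semicontinuity of $F\mapsto\int \mathcal{Q}_3(F)\,dx$ on $L^2(\Omega^1)$ to the weak convergence $\car{\Omega_h}\sym(\mathcal{G}^h-\frac{1}{2}\mathcal{J}^2)\rightharpoonup \sym(\mathcal{G}-\frac{1}{2}\mathcal{J}^2)$. Since the lower bound is attained for the almost-minimizing sequence, equality holds in this step; combined with the positive definiteness of $\mathcal{Q}_3$ on symmetric $3\times 3$ matrices, which follows from the quadratic growth assumption in \eqref{properties}, this forces strong $L^2(\Omega^1)$-convergence of the symmetric scaled strain $\sym(Q_0^t(R^h)^t Z^h)$. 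Averaging in $x_3$, extracting the $2\times 2$ principal minor, and inserting into the decomposition $h^{-1}\sym((\nabla y_0)^t \nabla V^h)=J_1^h+J_2^h$ from Step 3 of Theorem \ref{lowerless4} then delivers strong $L^2(\Omega)$-convergence to $\mathbb{S}$, since $J_2^h$ already converges strongly there.

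The main obstacle in this last step is the technical issue of transferring the strong convergence of $\sym(Q_0^t(R^h)^t Z^h)$, phrased in terms of the field $R^h(x')$, to the strong convergence of $J_1^h=\sym(Q_0^t(\bar R^h)^t\fint Z^h\,dx_3)_{2\times 2}$, phrased in terms of the constant $\bar R^h$. The bound $\|R^h-\bar R^h\|_{W^{1,2}(\Omega)}=O(h)$ from \eqref{RhRha}, combined with the two-dimensional Sobolev embedding $W^{1,2}\hookrightarrow L^p$ for every $p<\infty$, yields $\|R^h-\bar R^h\|_{L^p}=O(h)$, which is the key ingredient for controlling the discrepancy against the $L^2$-bounded family $\fint Z^h\,dx_3$ and for removing the cutoff $\car{\Omega_h}$ in the limit.
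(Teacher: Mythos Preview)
Your argument is correct and follows exactly the standard $\Gamma$-convergence route that the paper indicates. The paper's own proof is a two-line remark: the minimization statement is declared ``standard'' (this is precisely your chaining of the liminf inequality from Theorem~\ref{lowerless4} with the recovery sequence from Theorem~\ref{upperless4}), and for the strong convergence in (iii) the paper simply refers to Theorem~2.5 in \cite{lemopa1} without supplying any details. Your sketch of that step---equality in the weak lower-semicontinuity inequality for $\int\mathcal{Q}_3$ forces strong $L^2$-convergence of the cutoff symmetric strain $\car{\Omega_h}\sym(\mathcal{G}^h-\tfrac12\mathcal{J}^2)$, and one then transfers this to $J_1^h$---is the correct mechanism and goes considerably further than the paper.

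One caution on your final paragraph: the bound $\|R^h-\bar R^h\|_{L^p}=O(h)$ for every $p<\infty$ does not by itself control the $L^2$ norm of the product with the merely $L^2$-bounded family $\fint Z^h\,dx_3$ via H\"older (you would need an $L^q$ bound on the latter for some $q>2$, which is not available a priori). Completing this transfer and the removal of the cutoff $\car{\Omega_h}$ requires an additional ingredient---for instance, showing that the complement $\Omega^1\setminus\Omega_h$ carries a vanishing share of $\int|\sym\mathcal{G}^h|^2$, or invoking the specific decomposition used in \cite{lemopa1}. This is a technical refinement rather than a conceptual gap, and your identification of it as ``the main obstacle'' is accurate.
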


\begin{proof}
The proof is standard. The only possibly nontrivial part is the strong convergence of the
scaled tangential strains in (iii), which can be deduced as in Theorem 2.5 in
\cite{lemopa1}.
\end{proof}

\medskip

Let us now compare the functional (\ref{limit_fun}) with the
von-K\'arm\'an theory of thin shells that has been derived
in \cite{lemopa1}. Recall that when $S$ is a smooth $2$d surface in
$\mathbb{R}^3$, the $\Gamma$-limit of the scaled elastic energies
$h^{-4}\big( \frac{1}{h}\int_{S^h} W(\nabla u^h)\big)$ on thin shells
$S^h$ with mid-surface $S$, is:
\begin{equation}\label{vKS}
\tilde{\mathcal{I}}_{4,S} (\tilde V, \tilde{\mathbb{S}}) = \frac{1}{2}\int_S
\mathcal{Q}_2\big(\tilde{\mathbb{S}} - \frac{1}{2}(\tilde A^2)_{tan}\big)\mbox{d}y +
\frac{1}{24}\int_S \mathcal{Q}_2\big((\nabla(\tilde A\vec N) - \tilde A\Pi)_{tan}\big)\mbox{d}y.
\end{equation}
Above, $\Pi$ stands for the shape operator of $S$ and $\vec N$ is the
unit normal vector to $S$. The subscript ${tan}$ means 
taking the restriction of a quadratic form (or an operator) to the
tangent space $T_yS$. The arguments of $\tilde{\mathcal{I}}_{4,S}$ are: 

(i) First order infinitesimal isometries $\tilde V$ on $S$. These are
vector fields $\tilde V\in W^{2,2}(S, \mathbb{R}^3)$ with skew
symmetric covariant derivative, so that one may define:
\begin{equation}\label{fii}
\tilde A\in W^{1,2}(S, so(3)) \quad \mbox{with} \quad \tilde A(y) \tau
= \partial_\tau\tilde V(y) \quad \forall y\in S \quad \forall \tau\in T_yS;
\end{equation}

(ii) Finite strains $\tilde{\mathbb{S}}$ on $S$. 
These are tensor fields $\tilde{\mathbb{S}}\in L^2(S, \mathbb{R}^{2\times 2}_{sym})$ such that:
\begin{equation}\label{as}
\tilde{\mathbb{S}} = L^2 - \lim_{h\to 0} \mbox{sym} (\nabla \tilde w_h)_{tan} 
\quad \mbox{ for some} \quad  \tilde w_h\in W^{1,2}(S, \mathbb{R}^3).
\end{equation}

In the present setting, denote $S=y_0(\Omega)$ and observe that the 1-1
correspondence between $\tilde V$ in (\ref{fii}) and $V$ in
(\ref{sym0a}) is given by the change of variables $V=\tilde V\circ y_0$.
The skew-symmetric tensor field $\tilde A$ on $T_yS$ is then uniquely
given by:
\begin{equation}\label{fiii}
\tilde A(y_0(x')) \partial_ey_0 = \partial_eV(x') \quad \mbox{and}
\quad \tilde
A\vec b_0 = \vec p\qquad \forall e\in\mathbb{R}^2,
\end{equation}
and the finite strains in (\ref{as}) are related to (\ref{finst}) by:
$$\langle \tilde{\mathbb{S}}(y_0(x'))\partial_ey_0, \partial_ey_0\rangle =
\langle{\mathbb{S}}(x')e, e\rangle \qquad \forall e\in\mathbb{R}^2.$$

\medskip

Recall that the first of the two terms in the functional (\ref{vKS}) measures the difference of order $h^2$,
between the (Euclidean) metric on $S$ and the metric of the deformed
surface. Indeed, the amount of stretching of $S$ in the direction $\tau\in T_yS$,
induced by the deformation $u_h = id + h\tilde V + h^2\tilde w$, has
the expansion:
\begin{equation*}
|\partial_\tau u_h|^2 - |\tau|^2 =
h^2\Big(2\langle \partial_\tau\tilde w, \tau\rangle +
|\partial_\tau\tilde V|^2\Big) + \mathcal{O}(h^3) = 
2h^2\Big(\langle (\mbox{sym}\nabla \tilde w)\tau, \tau\rangle -
\frac{1}{2} \langle \tilde{A}^2\tau, \tau\rangle\Big) + \mathcal{O}(h^3).
\end{equation*}
The leading order quantity in the right hand side above coincides with:
$$\langle (\mbox{sym}\nabla w) e,e\rangle
+\frac{1}{2}\langle\partial_eV, \partial_eV\rangle =
\Big\langle\big(\mbox{sym}\nabla w + \frac{1}{2}(\nabla V)^t\nabla V \big)e, e\Big\rangle,$$
where we write $\tau = \partial_e y_0$, for any
$e\in\mathbb{R}^2$. This is precisely the argument of the first term in 
$\mathcal{I}_4(V, \mathbb{S})$, modulo the correction $(\nabla\vec b_0)^t\nabla
\vec b_0$ (equal to the third fundamental form on $S$ in case $\vec
b_0 = \vec N$), due to the incompatibility of the ambient Euclidean metric of
$S^h$ with the given prestrain $G$ on $\Omega^h$.

\medskip

The second term in (\ref{vKS}) measures the difference of order $h$,
between the shape operator $\Pi$ on $S$ and the shape operator $\Pi^h$
on the deformed surface $(id+h\tilde V)(S)$ whose unit normal we
denote by $\vec
N^h$. The amount of bending of $S$, in the direction $\tau\in T_yS$,
induced by the deformation $u_h=id + h\tilde V$ can be estimated by \cite{lemopa1}:
\begin{equation*}
\begin{split}
 (\mbox{Id} +h\tilde A)^{-1} \Pi^h & (\mbox{Id}+h\tilde A)\tau - \Pi\tau
= (\mbox{Id} +h\tilde A)^{-1} \big(\partial_\tau\vec N^h +
\mathcal{O}(h^2)\big)\tau - \Pi\tau \\ & = 
(\mbox{Id} +h\tilde A)^{-1} \Big( (\mbox{Id} +h\tilde A)\Pi\tau + h
(\partial_\tau A)\vec N + \mathcal{O}(h^2)\Big) - \Pi\tau \\ & = 
(\mbox{Id} - h\tilde A) h (\partial_\tau\tilde A) \vec N +
\mathcal{O}(h^2) \\ & = h (\partial_\tau\tilde A) \vec N +
\mathcal{O}(h^2) = h\Big(\nabla (\tilde A\vec N) - \tilde A\Pi\Big) + \mathcal{O}(h^2). 
\end{split}
\end{equation*}
The leading order term in this expansion coincides with the term
$(\nabla y_0)^t\nabla \vec p + (\nabla V)^t\nabla \vec b_0$ when $\vec
b_0 = \vec N$, because in view of (\ref{fiii}):
\begin{equation*}
\begin{split}
\langle(\partial_\tau\tilde A) \vec b_0, \tau\rangle  & =
\langle(\partial_e(\tilde A\vec b_0), \partial_ey_0\rangle -
\langle(\tilde A\partial_e \vec b_0, \partial_ey_0\rangle  =
\langle(\partial_e\vec p,  \partial_ey_0\rangle +
\langle(\partial_e \vec b_0, \tilde A\partial_ey_0\rangle \\ & =
\big\langle(\nabla y_0)^t\nabla\vec p ~e, e\big\rangle -
\big\langle(\nabla V)^t\nabla \vec b_0~ e, e\big\rangle,
\end{split}
\end{equation*}
where we again wrote $\tau = \partial_e y_0\in
T_{y_0(x')} S$, for any $e\in\mathbb{R}^2$. This is precisely the
argument in the second term in $\mathcal{I}_4(V,\mathbb{S})$.

In the next section we identify the geometric significance of the last
term in (\ref{limit_fun}).

\section{The scaling optimality}

In this section, we prove the following crucial result:

\begin{theorem}\label{th61}
Assume (\ref{vanish}), together with:
\begin{equation}\label{nowe}
\sym \big((\nabla y_0)^t\nabla \vec d_0\big)  + (\nabla\vec b_0)^t \nabla\vec
b_0 =0,
\end{equation}
where $y_0$, $\b$ and $\d$ are defined in (\ref{sym0}), (\ref{Q_0}), (\ref{d}).
Then the metric $G$ is flat, i.e. $Riem(G)\equiv 0$ in
$\Omega^h$. Equivalently: $\min E^h=0$ for all $h$.
\end{theorem}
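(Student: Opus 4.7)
The plan is to show that hypotheses (\ref{vanish}) and (\ref{nowe}) together force all six algebraically independent components of the Riemann curvature tensor of $G$ to vanish on $\Omega^h$, so that $G$ is flat. Once flatness is established, the ``only if'' direction of \cite{lepa} (or equivalently the classical Bonnet theorem on isometric immersions, applicable since $\Omega$ is simply connected) produces a smooth $u^h\colon \Omega^h\to\mathbb{R}^3$ with $(\nabla u^h)^t \nabla u^h = G$, hence $E^h(u^h)=0$ for every $h>0$, and $\min E^h = 0$.

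In dimension three the Riemann tensor has exactly six independent components, which I would split into the three \emph{midplate} curvatures $R_{1212}, R_{1213}, R_{1223}$ -- already zero by (\ref{vanish}) -- and the three \emph{transverse} curvatures $R_{1313}, R_{1323}, R_{2323}$. The heart of the proof is therefore to establish the identity
\begin{equation*}
\sym\bigl((\nabla y_0)^t\nabla \vec d_0\bigr) + (\nabla \vec b_0)^t\nabla \vec b_0 \;=\; \left[\begin{array}{cc} R_{1313} & R_{1323}\\ R_{1323} & R_{2323}\end{array}\right],
\end{equation*}
after which (\ref{nowe}) immediately kills the transverse curvatures as well. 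This is precisely the content of Theorem \ref{strangeresult} announced in the introduction.

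To prove the identity, I would exploit the auxiliary deformation $Y(x', x_3) = y_0(x') + x_3\vec b_0(x') + \tfrac{x_3^2}{2}\vec d_0(x')$ already used in Lemma \ref{scalingh4}, whose pullback metric $G^{\ast}=(\nabla Y)^t\nabla Y$ has identically vanishing Riemann tensor, being a pullback of the Euclidean metric through a smooth map into $\mathbb{R}^3$. A direct Taylor expansion at the midplate, using $Q_0^tB_0\in so(3)$ from (\ref{skew}) and the definitions of $B_0$, $D_0$, yields
\begin{equation*}
G^{\ast}(x', x_3) - G(x') \;=\; x_3^2 \bigl(B_0^tB_0 + \sym(Q_0^tD_0)\bigr)(x') + \mathcal{O}(x_3^3),
\end{equation*}
and its upper-left $2\times 2$ block at $x_3=0$ is exactly the tensor appearing in (\ref{nowe}). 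Since $G$ depends only on $x'$, the components $R_{i3j3}$ ($i,j\in\{1,2\}$) of $\mathrm{Riem}(G)$ at $x_3=0$ involve only tangential derivatives of $G_{ij},G_{i3},G_{33}$ together with Christoffel products, none of which change if one replaces $G$ by $G^\ast$; the only surviving contribution to $R_{i3j3}(G)-R_{i3j3}(G^\ast)$ at the midplate comes from the $\partial_3^2 g_{ij}$ term in the coordinate formula for the Riemann tensor, which is fed precisely by the $x_3^2$-coefficient above. Setting $R_{i3j3}(G^\ast)\equiv 0$ then yields the claimed formula.

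The main obstacle will be the algebraic bookkeeping of the Christoffel symbols in the adapted frame $Q_0$ and the careful matching of indices that proves the identity above; once accomplished, hypothesis (\ref{nowe}) gives $R_{1313}=R_{1323}=R_{2323}=0$, which together with (\ref{vanish}) provides $\mathrm{Riem}(G)\equiv 0$ on $\Omega^h$, and $\min E^h = 0$ is immediate from \cite{lepa}.
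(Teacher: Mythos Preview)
Your proposal is correct and reaches the same key identity as Theorem~\ref{strangeresult}, but by a genuinely different route. The paper proves the identity by a direct coordinate computation: it writes $R_{i3j3}(G)=-\tfrac12\partial_{ij}G_{33}+G_{np}(\Gamma^n_{i3}\Gamma^p_{j3}-\Gamma^n_{ij}\Gamma^p_{33})$, then expands $\partial_{ij}y_0$, $\partial_i\vec b_0$ and $\vec d_0$ in the moving frame $\{\partial_1y_0,\partial_2y_0,\vec b_0\}$ and shows (formula~(\ref{exprd})) that the expansion coefficients are precisely the Christoffel symbols $\Gamma^n_{ij},\Gamma^n_{i3},\Gamma^n_{33}$; the scalar products on the left of (\ref{nowe2}) then match the Christoffel products term by term. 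Your argument instead compares $G$ with the explicitly flat pullback metric $G^*=(\nabla Y)^t\nabla Y$: since $Q_0^tB_0\in so(3)$ forces $G^*-G=\mathcal{O}(x_3^2)$, at $x_3=0$ the two metrics agree together with all first derivatives (including $\partial_3$), hence all Christoffel symbols coincide, and in the curvature formula the sole discrepancy is the $-\tfrac12\partial_{33}g_{ij}$ term, which reads off the $x_3^2$-coefficient $(B_0^tB_0+\sym(Q_0^tD_0))_{2\times 2}$. Your approach is shorter and more conceptual, trading the Christoffel bookkeeping for a single Taylor expansion and the flatness of a pullback; the paper's computation is more explicit and yields as a by-product the frame identification (\ref{exprd}), which has some independent interest. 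One small point worth making precise in your write-up: the vanishing of $\partial_3G^*|_{x_3=0}$ (needed so that the Christoffel symbols of $G$ and $G^*$ genuinely agree at the midplate) is exactly the consequence of $Q_0^tB_0\in so(3)$ that you invoke, so state it as such rather than leaving it implicit.
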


Observe that when $\vec b_0 = \vec N$, then by
(\ref{d}) there must be $\vec d_0 = 0$, and hence condition
(\ref{nowe}) becomes: $\vec N\equiv const$. This is consistent with our
previous observation that when $Ge_3 = e_3$, then already condition
(\ref{sym0}) is enough to conclude immersability of $G$ in
$\mathbb{R}^3$. Equivalently, $G_{2\times 2}$ is immersible in
$\mathbb{R}^2$, so that indeed $y_0(\Omega)$ must be planar in this case.

\medskip

Towards a proof of Theorem \ref{th61}, recall that $Riem(G)$ is the covariant Riemann curvature tensor, whose
components $R_{....}$ and their relation to the contravariant curvatures in $R^._{...}$ are:
\begin{equation*}
\begin{split}
& R_{iklm} = \frac{1}{2}\big( \partial_{kl}G_{im} + \partial_{im}G_{kl}
  - \partial_{km}G_{il} - \partial_{il}G_{km} \big)
+ G_{np}\big(\Gamma_{kl}^n\Gamma_{im}^p - \Gamma_{km}^n\Gamma_{il}^p\big)\\
& R_{iklm} = G_{is} R^s_{klm},
\end{split}
\end{equation*}
where we used the Einstein summation convention and the Christoffel symbols:
\begin{equation}\label{christ}
\Gamma_{kl}^n = \frac{1}{2}G^{ns} \big(\partial_kG_{sl} + \partial_l G_{sk} -\partial_sG_{kl}\big).
\end{equation}
In view of the symmetries in $Riem(G)$ of a $3$-dimensional metric $G$, its flatness is equivalent to the
vanishing of the following curvatures:
$$R_{1212}, ~~ R_{1213}, ~~ R_{1223}, ~~ R_{1313}, ~~ R_{1323}, ~~ R_{2323}.$$
The proof of Theorem \ref{th61} is a consequence of the following 
observation. 

\begin{theorem}\label{strangeresult}
Assume (\ref{vanish}) and let $y_0$, $\b$ and $\d$ be defined as in (\ref{sym0}), (\ref{d}). Then:
\begin{equation}\label{nowe2}
\sym \big((\nabla y_0)^t\nabla \vec d_0\big)  + (\nabla\vec b_0)^t \nabla\vec
b_0 = \left[\begin{array}{cc} R_{1313} & R_{1323} \\ R_{1323} & R_{2323}\end{array}\right].
\end{equation}
\end{theorem}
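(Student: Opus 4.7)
My plan is to pull back the flat Euclidean metric of $\mathbb{R}^3$ via the quadratic map $\Psi(x',x_3)=y_0(x')+x_3\b(x')+\tfrac{x_3^2}{2}\d(x')$ already used in Lemma~\ref{scalingh4}, and to compare the resulting pullback $g:=(\nabla\Psi)^t\nabla\Psi$ with the prescribed metric $G$. Since $\det Q_0>0$, the map $\Psi$ is a local immersion near $\{x_3=0\}$, hence $g$ is a flat metric there: $\mathrm{Riem}(g)\equiv 0$. The whole problem then reduces to computing the discrepancy $g-G$ and reading off from it the only potentially nontrivial block $[R_{i3j3}]_{i,j\in\{1,2\}}$ of $\mathrm{Riem}(G)$.

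The first step is to expand $g$ in powers of $x_3$ around $x_3=0$. This can be done either directly, entry by entry, or equivalently by recycling the identity $(\nabla\Psi)^t\nabla\Psi=A(G^h)^tG^hA$ with $(G^h)^tG^h=\Id+x_3^2K+\bigo{x_3^3}$ already obtained in Lemma~\ref{scalingh4}, combined with the algebraic fact $Q_0G^{-1}Q_0^t=\Id$ which follows from $Q_0^tQ_0=G$. Either way one finds $g(x',0)=G(x')$, $\partial_3 g(x',0)=0$ in every entry, and for $i,j\in\{1,2\}$
\begin{equation*}
 g_{ij}(x',x_3)=G_{ij}(x')+x_3^2 K_{ij}(x')+\bigo{x_3^3},\qquad K_{ij}=\inn{\partial_i\b}{\partial_j\b}+\sym\bigl((\nabla y_0)^t\nabla\d\bigr)_{ij},
\end{equation*}
together with $g_{33}-G_{33}=x_3^2|\d|^2$ and $g_{i3}-G_{i3}=\bigo{x_3^2}$. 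The key algebraic inputs are that the $x_3$-linear coefficient of $g_{ij}$ vanishes by (\ref{sym0}), while the $x_3$-linear coefficient of $g_{i3}$ vanishes thanks to the definition (\ref{d}) of $\d$ together with $\inn{\b}{\d}=0$; this is exactly what the data (\ref{sym0})--(\ref{d}) were cooked up to guarantee.

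The second and decisive step is to compare the Riemann tensors of $g$ and $G$ on $\{x_3=0\}$. Because $G$ is $x_3$-independent and $g-G=\bigo{x_3^2}$ in every entry, all first derivatives of $g$ and $G$ coincide at $x_3=0$, so the Christoffel symbols (\ref{christ}) of the two metrics agree there. Hence only the four second-derivative terms of the Riemann formula can contribute to $R_{i3j3}(g)-R_{i3j3}(G)$; three of them involve $\partial_{3j}(g-G)_{i3}$, $\partial_{i3}(g-G)_{3j}$ and $\partial_{ij}(g-G)_{33}$ which vanish at $x_3=0$ by Step~1, while the remaining term contributes exactly $-\tfrac12\,\partial_{33}(g-G)_{ij}\big|_{x_3=0}=-K_{ij}$. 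Since $\mathrm{Riem}(g)\equiv 0$, this gives $R_{i3j3}(G)=K_{ij}$ on $\Omega$, which extends to $\Omega^h$ because $G$ does not depend on $x_3$, and is exactly the matrix identity (\ref{nowe2}).

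The only real obstacle is the bookkeeping in Step~1: checking that the $x_3$-linear coefficient of $g-G$ vanishes in each of the six relevant entries, which genuinely uses both (\ref{sym0}) and (\ref{d}) together with the orthogonality $\inn{\b}{\d}=0$. Once that is in place, Step~2 amounts to a single line of cancellation in the Riemann formula and no further geometric ingredient is required.
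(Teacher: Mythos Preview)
Your argument is correct and takes a genuinely different route from the paper's proof. The paper proceeds by direct computation: it expands $\partial_{ij}y_0$, $\partial_i\b$ and $\d$ in the frame $\{\partial_1 y_0,\partial_2 y_0,\b\}$, identifies the coefficients with the Christoffel symbols $\Gamma^n_{ij}$, $\Gamma^n_{i3}$, $\Gamma^n_{33}$ of $G$ (formula (\ref{exprd})), and then checks the two scalar identities $\langle\partial_{ij}y_0,\d\rangle=G_{np}\Gamma^n_{ij}\Gamma^p_{33}$ and $\langle\partial_i\b,\partial_j\b\rangle=G_{np}\Gamma^n_{i3}\Gamma^p_{j3}$ by taking inner products. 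Your approach instead compares the flat pullback metric $g=(\nabla\Psi)^t\nabla\Psi$ with $G$, observes that their $1$-jets coincide along $\{x_3=0\}$ precisely because of (\ref{sym0}) and (\ref{d}), and reads off $R_{i3j3}(G)$ as the sole surviving second-derivative contribution $-\tfrac12\partial_{33}(g-G)_{ij}|_{x_3=0}=-K_{ij}$. This is more conceptual: it exhibits (\ref{nowe2}) as the curvature obstruction to improving the matching $g=G+\bigo{x_3^2}$, and it ties in neatly with Lemma~\ref{scalingh4}, where the same matrix $K$ already appeared as the leading defect in $(G^h)^tG^h$. The paper's computation, on the other hand, is more explicit and produces the frame formulas (\ref{exprd}) as a by-product, which are reused later in the examples of Section~7.
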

\begin{proof}
{\bf 1.} We have:
\begin{equation*}
\begin{split}
& R_{1313} = -\frac{1}{2}\partial_{11}G_{33} +
G_{np}\big(\Gamma^n_{13}\Gamma^p_{13} - \Gamma^n_{11}\Gamma^p_{33}\big), \\
& R_{2323} = -\frac{1}{2}\partial_{22}G_{33} +
G_{np}\big(\Gamma^n_{23}\Gamma^p_{23} - \Gamma^n_{22}\Gamma^p_{33}\big), \\
& R_{1323} = -\frac{1}{2}\partial_{12}G_{33} +
G_{np}\big(\Gamma^n_{13}\Gamma^p_{23} - \Gamma^n_{12}\Gamma^p_{33}\big).
\end{split}
\end{equation*}
On the other hand, in view of (\ref{d}): 
\begin{equation*}
\begin{split}
\forall i,j=1,2 \qquad  \frac{1}{2} \Big( \langle\partial_i y_0, \partial_j \vec d_0\rangle
+ \langle\partial_j y_0, \partial_i \vec d_0\rangle\Big)  & =
\frac{1}{2}\Big(\partial_j\langle\partial_i y_0, \vec d_0\rangle  
+ \partial_i\langle\partial_j y_0, \vec d_0\rangle   \Big)
- \langle\partial_{ij} y_0, \vec d_0\rangle \\ &  = 
- \frac{1}{2}\partial_{ij}G_{33} - \langle\partial_{ij} y_0, \vec d_0\rangle 
\end{split}
\end{equation*}
because: $\partial_j\langle\partial_i y_0, \vec  d_0\rangle  
+ \partial_i\langle\partial_j y_0, \vec d_0\rangle
= - \partial_{ij}|\vec b_0|^2 = - \partial_{ij}G_{33}$.
Consequently, the formula (\ref{nowe2}) will follow, if we establish:
\begin{equation}\label{11-33}
\forall i,j=1,2 \qquad \langle\partial_{ij} y_0, \vec d_0\rangle = G_{np} \Gamma^n_{ij}\Gamma^p_{33}
\quad \mbox{ and } \quad
\langle\partial_i\vec b_0, \partial_j\vec b_0 \rangle = G_{np}\Gamma^n_{i3}\Gamma^p_{j3}.
\end{equation}

\smallskip

{\bf 2.} Before proving (\ref{11-33}) we gather some useful formulas.
Note that $\partial_iG=2\sym{((\partial_iQ)^tQ})$ for
$i=1,2$. Therefore, by direct inspection:
\begin{equation}\label{2part}
\forall i,j,k=1,2\qquad 
\inn{\partial_{ij}y_0}{\partial_k y_0}=\frac{1}{2}(\partial_iG_{kj}+\partial_jG_{ki}-\partial_kG_{ij}).
\end{equation}
Also, recall that condition \eqref{sym0} is equivalent to (see
\cite{BLS}, proof of Theorem 5.3, formula (5.8)):
\begin{equation}\label{bdey}
\forall i,j=1,2\qquad \inn{\partial_{ij}y_0}{\b}=\frac{1}{2}(\partial_iG_{j3}+\partial_jG_{i3}).
\end{equation}
Therefore, for all $i,j=1,2$:
\begin{equation}\label{innyb}
\begin{split}
&\inn{\partial_j y_0}{\partial_i\b}=\partial_i\inn{\partial_j y_0}{\b}-\inn{\partial_{ij}y_0}{\b}
=\frac{1}{2}(\partial_iG_{j3}-\partial_jG_{i3}),\\
&\langle\partial_i \b, \b\rangle= \frac{1}{2}\partial_i G_{33}.
\end{split}
\end{equation}

We now express $\partial_{ij}y_0$, $\partial_i\b$ and $\d$ in the
basis $\{\partial_1y_0, \partial_2y_0, \b\}$, writing:  
\begin{equation}\label{fofo}
\begin{split}
&\partial_{ij}y_0=\alpha^1_{ij}\partial_1y_0+\alpha^2_{ij}\partial_2y_0+\alpha^3_{ij}\b ,\\
&\partial_i\b=\beta_i^1\partial_1y_0 + \beta_i^2\partial_2y_0 +\beta_i^3 \b,\\
&\d=\gamma^1\partial_1 y_0+\gamma^2 \partial_2 y_0+\gamma^3 \b.
\end{split}
\end{equation}
By \eqref{2part}, (\ref{bdey}), \eqref{innyb} and \eqref{d}, it
follows that:
\begin{equation*}
\begin{split}
& G\Big(\alpha^1_{ij},\alpha^2_{ij}, \alpha^3_{ij}\Big)^t = G
Q_0^{-1} \partial_{ij} y_0 =  Q_0^t\partial_{ij}y_0\\ & \qquad\qquad\quad\quad\;\;\;
= \frac{1}{2}\Big(\partial_iG_{1j}+\partial_jG_{1i}-\partial_1G_{ij}, \partial_iG_{2j}+\partial_jG_{2i}-\partial_2G_{ij}, 
\partial_iG_{3j}+\partial_jG_{3i}\Big),\\ 
& G\Big(\beta_i^1, \beta_i^2, \beta_i^3\Big)^t = G Q_0^{-1}\partial_i\vec b_0
= Q_0^t\partial\vec b_0 =  Q_0^t\partial_i\b \\ &
\qquad\qquad\quad\quad =
\frac{1}{2}\Big(\partial_iG_{13}-\partial_1G_{i3}, \partial_iG_{23}-\partial_2G_{i3}, \partial_iG_{33}\Big)^t,\\
& G \big(\gamma^1, \gamma^2, \gamma^3\big)^t = GQ_0^{-1}\vec d_0 =
Q_0^t\vec d_0 = -\frac{1}{2}  \Big( \partial_1 G_{33},  \partial_2 G_{33}, 0\Big)^t.
\end{split}
\end{equation*}
In view of  (\ref{christ}) we then obtain, for all $i,j=1,2$:
\begin{equation*}
\begin{split}
& (\alpha^1_{ij},\alpha^2_{ij}, \alpha^3_{ij}) = (\Gamma^1_{ij},
\Gamma^2_{ij}, \Gamma^3_{ij}), \quad
(\beta_i^1, \beta_i^2, \beta_i^3) = (\Gamma^1_{i3}, \Gamma^2_{i3},
\Gamma^3_{i3}), \quad
 (\gamma^1, \gamma^2, \gamma^3)^t = (\Gamma^1_{33}, \Gamma^2_{33}, \Gamma^3_{33}),
\end{split}
\end{equation*}
so that (\ref{fofo}) becomes:
\begin{equation}\label{exprd}
\begin{split}
&\partial_{ij}y_0=\Gamma^1_{ij}\partial_1y_0+\Gamma^2_{ij}\partial_2y_0+\Gamma^3_{ij}\b, \\
&\partial_i\b= \Gamma^1_{i3}\partial_1y_0+ \Gamma^2_{i3}\partial_2y_0+ \Gamma^3_{i3}\b,\\
&\d=\Gamma^1_{33}\partial_1y_0+\Gamma^2_{33}\partial_2y_0+\Gamma^3_{33}\b.
\end{split}
\end{equation}

\medskip

{\bf 3.} We now prove (\ref{11-33}). Keeping in mind that $Q_0^T Q_0 =
G$, the scalar products of expressions in (\ref{exprd}) are:
\begin{equation*}
\begin{split}
& \inn{\partial_{ij}y_0}{\d} = \big\langle
\Gamma^n_{ij}\partial_ny_0+\Gamma^3_{ij}\b, \Gamma^p_{33}\partial_py_0
  +\Gamma^3_{33}\b\big\rangle = G_{np}\Gamma^n_{ij}\Gamma^p_{33}, \\
& \inn{\partial_{i}\b}{\partial_j\b}=\big\langle\Gamma^n_{i3}\partial_ny_0+\Gamma^3_{i3}\b,
\Gamma^p_{j3}\partial_py_0
  +\Gamma^3_{j3}\b\big\rangle = G_{np}\Gamma^n_{i3}\Gamma^p_{j3}.
\end{split}
\end{equation*}
exactly as claimed in (\ref{11-33}). This ends the proof of Theorem
\ref{strangeresult} and also of Theorem \ref{th61}.
\end{proof}

\section{Two examples}

In this section we compute the energy $\mathcal{I}_4(V, \mathbb{S})$ in the two
particular cases of interest:
\begin{equation*}
G(x', x_3) = \mbox{diag}(1,1,\lambda(x')) \qquad \mbox{and} \qquad
G(x', x_3) = \lambda(x')\mbox{Id}_3.
\end{equation*}
Let $\p$ be as in the definition \eqref{def_p}.  
Writing: $\p=\alpha^1 \partial_1 y_0 + \alpha^2 \partial_2 y_0 + \alpha^3\b$, we obtain:
\begin{equation*}
G \big( \alpha^1, \alpha^2,  \alpha^3\big)^t = - \big( \inn{\partial_1 V}{\b}, \inn{\partial_2 V}{\b}, 0\big)^t.
\end{equation*}
Consequently:
\begin{equation}\label{exprp}
\begin{split}
\p = - G^{1i}\inn{\partial_i V}{\b} \partial_1 y_0 - G^{2i}\inn{\partial_i V}{\b}\partial_2y_0  
- G^{3i}\inn{\partial_i V}{\b}\b.
\end{split}
\end{equation}

\begin{lemma}\label{ex1}
Let $\lambda:\bar{\Omega}\rightarrow \mathbb{R}$ be smooth and
strictly positive. Consider the metric of the form:
$G(x',x_3)=\mathrm{diag}(1,1,\lambda(x'))$. Then:
\begin{itemize}
\item[(i)]  $G$ is immersible in $\mathbb{R}^3$ if and only if:
\begin{equation*}
M_\lambda =\nabla^2\lambda-\frac{1}{2\lambda}\nabla \lambda \otimes
\nabla \lambda \equiv 0 \quad \mbox{in } \Omega,
\end{equation*}
while the condition $M_\lambda\not\equiv 0$ is equivalent to:
$ch^4\leq \inf E^h \leq Ch^4$.
\item[(ii)] The $\Gamma$-limit energy functional $\mathcal{I}_4$ in (\ref{limit_fun}) becomes:
\begin{equation*}
\begin{split}
&  \forall w\in W^{1,2}(\Omega,\mathbb{R}^2) \quad\forall v\in
  W^{2,2}(\Omega,\mathbb{R}) \\ & \qquad \mathcal{I}_4(v,w) = 
\frac{1}{2}\int_{\Omega}\mathcal{Q}_{2}\big(\mathrm{sym} \nabla
w + \frac{1}{2}\nabla v\otimes \nabla v +
\frac{1}{96\lambda}\nabla\lambda\otimes\nabla\lambda\big)~\mathrm{d}x' \\ &
\qquad\qquad\qquad\quad
+\frac{1}{24}\int_{\Omega}\mathcal{Q}_{2}\big(\sqrt{\lambda}\nabla^2 v\big)
+\frac{1}{5760}\int_{\Omega}\mathcal{Q}_{2}\big(M_\lambda\big) ~\mathrm{d}x',
\end{split}
\end{equation*}
where $\mathcal{Q}_2$ is independent of $x'$ and it is defined by $\mathcal{Q}_{2,Id}$ in (\ref{Q2A}).
\end{itemize}
\end{lemma}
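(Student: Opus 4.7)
\textbf{Proof plan for Lemma \ref{ex1}.}
The strategy is a direct calculation: identify all the geometric objects appearing in $\mathcal{I}_4(V,\mathbb{S})$ explicitly for the prescribed diagonal metric, substitute into the formula \eqref{limit_fun}, and then read off the claims of (i) and (ii).

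\smallskip

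First, I would identify the basic data. Since $G_{2\times 2}=\mathrm{Id}_2$, the unique (up to rigid motions) isometric immersion satisfying \eqref{sym0} is the flat map $y_0(x')=(x',0)^t$, so that $\partial_i y_0=e_i$ for $i=1,2$. Using $G^{33}=1/\lambda$ and $G^{13}=G^{23}=0$, formula \eqref{exprb} yields $\vec b_0=\sqrt{\lambda}\,e_3$. The definition \eqref{d} then gives $\vec d_0=-\tfrac12(\nabla\lambda,0)^t$, since $Q_0^t\vec d_0=-\tfrac12(\partial_1\lambda,\partial_2\lambda,0)^t$ and $Q_0=\mathrm{diag}(1,1,\sqrt{\lambda})$ on the relevant rows. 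The constraint \eqref{sym0a} reduces, for $y_0$ flat, to requiring the $2\times 2$ principal minor of $\nabla V$ to be skew, so, absorbing infinitesimal planar rigid motions, it suffices to consider $V=(0,0,v)$ with $v\in W^{2,2}(\Omega,\mathbb{R})$; \eqref{def_p} then gives $\vec p=-\sqrt{\lambda}(\nabla v,0)^t$. Likewise, writing $\mathbb{S}=\mathrm{sym}\,\nabla w$ with $w\in W^{1,2}(\Omega,\mathbb{R}^2)$ parametrizes a dense subclass of $\mathcal{S}$.

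\smallskip

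Next, I would observe that $\mathcal{Q}_{2,A}(x',\cdot)$ collapses to the Euclidean $\mathcal{Q}_2$. Indeed, with $A=\mathrm{diag}(1,1,\sqrt\lambda)$, the map $\tilde F\mapsto A^{-1}\tilde F A^{-1}$ preserves the $2\times 2$ minor but freely rescales the entries in the third row/column, so the minimization in \eqref{Q2A} can be absorbed into the one defining $\mathcal{Q}_2$ (independent of $x'$).
Then I would compute each of the three arguments of $\mathcal{Q}_{2,A}$ in \eqref{limit_fun}:
\begin{itemize}
\item $(\nabla V)^t\nabla V=\nabla v\otimes\nabla v$ and $(\nabla\vec b_0)^t\nabla\vec b_0=\tfrac{1}{4\lambda}\nabla\lambda\otimes\nabla\lambda$, giving the first argument $\mathrm{sym}\,\nabla w+\tfrac12\nabla v\otimes\nabla v+\tfrac{1}{96\lambda}\nabla\lambda\otimes\nabla\lambda$;
\item a short computation of $\partial_j p^i=-\partial_j(\sqrt\lambda\,\partial_i v)$ shows that $(\nabla y_0)^t\nabla\vec p+(\nabla V)^t\nabla\vec b_0=-\sqrt\lambda\,\nabla^2 v$, whose $\mathcal{Q}_2$-value is $\mathcal{Q}_2(\sqrt\lambda\,\nabla^2 v)$;
\item $(\nabla y_0)^t\nabla\vec d_0=-\tfrac12\nabla^2\lambda$ is already symmetric, and adding $(\nabla\vec b_0)^t\nabla\vec b_0$ produces $-\tfrac12 M_\lambda$, contributing $\tfrac{1}{4\cdot 1440}\mathcal{Q}_2(M_\lambda)=\tfrac{1}{5760}\mathcal{Q}_2(M_\lambda)$.
\end{itemize}
Collecting terms gives exactly the expression claimed in (ii).

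\smallskip

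For (i), I would first check that the three ``planar'' Riemann curvatures $R_{1212}$, $R_{1213}$, $R_{1223}$ automatically vanish for this diagonal metric, so that \eqref{vanish} is satisfied and the whole machinery applies. This is a direct verification using \eqref{christ}: the only nonzero Christoffel symbols are $\Gamma^3_{i3}$, $\Gamma^i_{33}$ and $\Gamma^3_{33}$, whose product structure kills the relevant contractions. Then Theorem \ref{strangeresult} identifies the third argument computed above with the Riemann matrix $\big[\begin{smallmatrix}R_{1313} & R_{1323}\\ R_{1323} & R_{2323}\end{smallmatrix}\big]$, so $M_\lambda\equiv 0$ is equivalent to $\mathrm{Riem}(G)\equiv 0$. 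By Theorem \ref{th61} this is equivalent to the immersibility of $G$, hence to $\min E^h=0$. If $M_\lambda\not\equiv 0$, the upper bound $\inf E^h\leq Ch^4$ is immediate from Lemma \ref{scalingh4}, while the lower bound $\inf E^h\geq ch^4$ follows because the (constant) third term of $\mathcal{I}_4$ is strictly positive, combined with the $\Gamma$-$\liminf$ inequality of Theorem \ref{lowerless4}.

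\smallskip

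The calculation is entirely mechanical; the only subtle step is the reduction of the infinitesimal isometry $V$ to the scalar out-of-plane displacement $v$ and of the finite strain to a symmetric gradient, which exploits the planarity of $y_0$ and the simple connectedness of $\Omega$. Everything else is bookkeeping that matches the numerical coefficients $\tfrac{1}{96}$ and $\tfrac{1}{5760}$.
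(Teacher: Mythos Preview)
Your proposal is correct and follows essentially the same route as the paper for part (ii): identify $y_0=x'$, $Q_0=A=\mathrm{diag}(1,1,\sqrt{\lambda})$, reduce $\mathcal{Q}_{2,A}$ to $\mathcal{Q}_2$, parametrize $V$ by a scalar $v$ and $\mathbb{S}$ by $\mathrm{sym}\,\nabla w$, then compute $\vec b_0$, $\vec d_0$, $\vec p$ and the three tensor arguments of $\mathcal{Q}_2$ exactly as you do. One small sharpening: by Korn's inequality the set $\{\mathrm{sym}\,\nabla w:\,w\in W^{1,2}(\Omega,\mathbb{R}^2)\}$ is already closed in $L^2$, so it is not merely a dense subclass of $\mathcal{S}$ but all of it; and in the bending term the individual pieces $(\nabla y_0)^t\nabla\vec p$ and $(\nabla V)^t\nabla\vec b_0$ each carry an extra $\tfrac{1}{2\sqrt{\lambda}}\nabla v\otimes\nabla\lambda$ contribution that only cancels upon summation (the paper records both pieces separately), so your sentence should say this explicitly rather than suggesting that $(\nabla y_0)^t\nabla\vec p$ alone yields $-\sqrt{\lambda}\,\nabla^2 v$.

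For part (i) the paper simply cites \cite{BLS}, whereas you give a self-contained argument via Theorem \ref{strangeresult} (to identify $-\tfrac12 M_\lambda$ with the remaining Riemann curvatures) together with the $\Gamma$-$\liminf$ bound of Theorem \ref{lowerless4} and the upper bound of Lemma \ref{scalingh4}; this is a legitimate and arguably cleaner alternative that stays entirely within the present paper.
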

\begin{proof}
Part (i) of the assertion has been shown in \cite{BLS}. For (ii), note first that:
$$y_0(x') = x'\quad \mbox{ and } \quad Q_0 = A = \mbox{diag}(1,1,\sqrt{\lambda}).$$
Consequently, directly from (\ref{Q2A}) we see that
$\mathcal{Q}_{2,A}= \mathcal{Q}_{2,Id}$, which we denote simply by
$\mathcal{Q}_2$. 

Further, in view of (\ref{finst}), every admissible
limiting strain $\mathbb{S}\in\mathcal{\mathbb{S}}$ has the form $\mathbb{S}=\mbox{sym}\nabla w$
for some $w\in W^{1,2}(\Omega,\mathbb{R}^2)$. Also, without loss of
generality, every admissible limiting displacement $V$ is of the form:
$V=(0,0,v)$ for some $v\in W^{2,2}(\Omega,\mathbb{R})$. We now
compute, using \eqref{exprb}, \eqref{exprd} and \eqref{exprp}:
\begin{equation*}
\b=\sqrt{\lambda}e_3, \qquad 
\quad \d=-\frac{1}{2}(\partial_1 \lambda, \partial_2 \lambda, 0), \qquad
\quad \p=-\sqrt{\lambda}(\partial_1 v, \partial_2 v,0).
\end{equation*}
Therefore:
\begin{equation*}
\begin{split}
& (\nabla\b)^t\nabla \b  =\frac{1}{4\lambda}\nabla\lambda\otimes\nabla\lambda,\qquad
(\nabla y_0)^t\nabla  \d = -\frac{1}{2}\nabla^2\lambda, \\
& (\nabla y_0)^t\nabla\p = -\frac{1}{2\sqrt{\lambda}}\nabla v\otimes
\nabla \lambda-\sqrt{\lambda}\nabla^2v, \qquad
(\nabla V)^t\nabla\b =\frac{1}{2\sqrt{\lambda}}\nabla v\otimes \nabla\lambda.
\end{split}
\end{equation*}
This ends the proof of Lemma \ref{ex1} in view of \eqref{limit_fun}.
\end{proof}

\begin{lemma}\label{ex2}
Let $\lambda:\bar\Omega\to\mathbb{R}$ be smooth and strictly positive. Consider the metric 
$G(x',x_3)=\lambda(x')\mathrm{Id}_3$. Denote $f=\frac{1}{2}\log \lambda$. Then:
\begin{itemize}
\item[(i)] Condition \eqref{sym0} is equivalent to $\Delta f=0$, which
  is also equivalent to the immersability of the metric $G_{2\times 2}$ in $\mathbb{R}^2$.
\item[(ii)] Under condition \eqref{sym0}, condition \eqref{nowe} can
  be directly seen as equivalent to $\mathrm{Ric}(G)=0$ and therefore to the immersability of $G$.
\item[(iii)] The $\Gamma$-limit energy functional in (\ref{limit_fun}) has the following form:
\begin{equation*}
\begin{split}
\mathcal{I}_4(V,\mathbb{S})
= & ~ \frac{1}{2}
\int_{\Omega}e^{-2f}\mathcal{Q}_2\big(\mathbb{S}+\frac{1}{2}(\nabla V)^t\nabla
V+\frac{1}{24}e^{2f}\nabla f\otimes\nabla f\big) ~\mathrm{d}x' \\ 
& \quad +\frac{1}{24}\int_{\Omega}\mathcal{Q}_2\big(2\nabla V_3 \otimes
\nabla f-\nabla^2 V_3-\inn{\nabla V_3}{\nabla
  f}\mathrm{Id}_2\big) ~\mathrm{d}x'\\  &
\quad +\frac{1}{1440}\int_{\Omega}\mathcal{Q}_2\big(e^{f}\mathrm{Ric}(G)_{2\times 2}\big) ~\mathrm{d}x',
\end{split}
\end{equation*}
where $\mathcal{Q}_2$ is as in Lemma \ref{ex1}, and where
$\mathrm{Ric}(G)_{2\times 2}$ denotes the tangential part of the 
Ricci curvature tensor of $G$, i.e.:
$$\mathrm{Ric}(G)_{2\times 2} = \left[\begin{array}{cc} R_{11} &
    R_{12}\\ R_{12} & R_{22}\end{array}\right].$$
\end{itemize}
\end{lemma}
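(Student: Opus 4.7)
My overall plan is to exploit the conformal structure of $G = e^{2f}\mathrm{Id}_3$, with $f = \tfrac{1}{2}\log\lambda$, to compute all the ingredients $\vec b_0$, $\vec d_0$, $\vec p$ built via (\ref{exprb}), (\ref{d}), (\ref{exprp}), and substitute into the general statements (\ref{sym0}), (\ref{nowe}), (\ref{limit_fun}). Since $G^{13} = G^{23} = 0$ and $G^{33} = e^{-2f}$, formula (\ref{exprb}) immediately gives $\vec b_0 = e^f \vec N$, where $\vec N$ is the unit normal to the surface $y_0(\Omega) \subset \mathbb{R}^3$; and since $A = e^f\mathrm{Id}_3$, the quadratic form simplifies to $\mathcal{Q}_{2,A}(x', F) = e^{-4f}\mathcal{Q}_2(F)$ with $\mathcal{Q}_2 = \mathcal{Q}_{2,\mathrm{Id}}$ as in Lemma \ref{ex1}.

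For (i), differentiating $\vec b_0 = e^f \vec N$ and using $\vec N \perp \partial_i y_0$ yields $(\nabla y_0)^t \nabla \vec b_0 = e^f (\nabla y_0)^t \nabla \vec N$, which is (up to sign) the matrix of the second fundamental form of $y_0(\Omega)$, hence automatically symmetric. Thus (\ref{sym0}) collapses to the vanishing of that second fundamental form, i.e.\ to $y_0(\Omega)$ being planar; and since $y_0$ is an isometric immersion of the conformal 2D metric $e^{2f}\mathrm{Id}_2$ whose Gauss curvature is $-e^{-2f}\Delta f$, this is equivalent to $\Delta f = 0$, which is also the flatness of $G_{2\times 2}$ and hence its immersability in $\mathbb{R}^2$.

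For (ii), the cleanest route is to invoke Theorem \ref{strangeresult}: the left-hand side of (\ref{nowe}) is exactly $(R_{a3b3})_{a,b=1,2}$, so together with (\ref{vanish}) condition (\ref{nowe}) forces $\mathrm{Riem}(G) \equiv 0$. In dimension three the Weyl tensor vanishes identically, so this is equivalent to $\mathrm{Ric}(G) = 0$, which by Cartan's theorem is the immersability of $G$ in $\mathbb{R}^3$. To obtain the \emph{direct} identification claimed in the statement, I would in parallel compute $\vec d_0 = -\partial_1 f \cdot \partial_1 y_0 - \partial_2 f \cdot \partial_2 y_0$ from (\ref{d}) and, using (\ref{2part}), verify that $\mathrm{sym}((\nabla y_0)^t\nabla \vec d_0) + (\nabla \vec b_0)^t\nabla \vec b_0 = e^{2f}\mathrm{Ric}(G)_{2\times 2}$; here the standard conformal formula $\overline R_{ij} = -(\partial_i\partial_j f - \partial_i f \partial_j f) - (\Delta f + |\nabla f|^2)\delta_{ij}$ for the Ricci tensor of $e^{2f}\mathrm{Id}_3$ pins down the right-hand side, and tracing under $\Delta f = 0$ forces $|\nabla f|^2 \equiv 0$, so that $f$ is constant and the whole Ricci tensor vanishes.

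For (iii), the plan is to substitute the explicit formulas for $\vec b_0$, $\vec d_0$, and $\vec p$ (the latter read off from (\ref{exprp}) using $G^{ij} = e^{-2f}\delta_{ij}$, after splitting $V$ into a tangential part and a normal component $V_3$) into (\ref{limit_fun}), and then use the quadraticity of $\mathcal{Q}_2$ to absorb the various conformal factors $e^{\pm 2f}$. The three resulting integrands match the membrane, bending, and curvature terms in the claimed formula: the last one is identified via Theorem \ref{strangeresult} with $\mathcal{Q}_2$ of (a conformal rescaling of) $\mathrm{Ric}(G)_{2\times 2}$; the second, $2\nabla V_3 \otimes \nabla f - \nabla^2 V_3 - \langle \nabla V_3, \nabla f\rangle\mathrm{Id}_2$, arises from $(\nabla y_0)^t\nabla \vec p + (\nabla V)^t\nabla \vec b_0$ after absorbing the normalizing factors. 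The main obstacle is purely the bookkeeping of these conformal weights when translating between the intrinsic argument of $\mathcal{Q}_{2,A}$ and the target argument of $\mathcal{Q}_2$; no genuinely new ideas beyond those already used in Lemma \ref{ex1} are required.
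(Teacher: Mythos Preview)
Your plan coincides with the paper's proof: it too computes $\vec b_0=\sqrt{\lambda}\,e_3$, $\vec d_0=-\partial_1 f\,\partial_1 y_0-\partial_2 f\,\partial_2 y_0$ and $\vec p$ from (\ref{exprb}), (\ref{exprd}), (\ref{exprp}), and then substitutes into (\ref{limit_fun}); part (i) is simply cited from \cite{BLS} there (your self-contained argument via the second fundamental form and the Gauss curvature $-e^{-2f}\Delta f$ is a valid replacement), and part (ii) is handled exactly by the direct identification $\mathrm{sym}\big((\nabla y_0)^t\nabla\vec d_0\big)+(\nabla\vec b_0)^t\nabla\vec b_0=e^{2f}\mathrm{Ric}(G)_{2\times 2}$ together with the trace argument you sketch.

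One point worth flagging: you write $\mathcal{Q}_{2,A}=e^{-4f}\mathcal{Q}_2$, whereas the paper uses $\mathcal{Q}_{2,A}=\lambda^{-1}\mathcal{Q}_2=e^{-2f}\mathcal{Q}_2$ in its final step. Since $A^{-1}\tilde F A^{-1}=e^{-2f}\tilde F$ and $\mathcal{Q}_3$ is quadratic, your factor $e^{-4f}$ is in fact the correct one; carrying it through consistently would shift the conformal weights in the displayed formula for $\mathcal{I}_4$. This appears to be a slip in the paper rather than a gap in your proposal.
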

\begin{proof}
The part $(i)$ has been deduced in \cite{BLS}, together with the expression:
\begin{equation}\label{ricci}
\mathrm{Ric}(G)= -(\nabla^2f-\nabla f \otimes \nabla f)^*-(\Delta f+|\nabla f|^2)\Id.
\end{equation} 
We now consider the case when \eqref{sym0} holds.
By $(i)$ the metric $G_{2\times 2}$ is immersible in $\mathbb{R}^2$
and in particular $\vec N=e_3$. Writing $V=(V_1, V_2, V_3)$, 
from \eqref{exprb}, \eqref{exprd} and \eqref{exprp} we obtain:
\begin{equation*}
\b = \sqrt{\lambda}e_3, \qquad
\d=-\big(\partial_1 f\partial_1 y_0+ \partial_2 f\partial_2 y_0\big),
\qquad \p=-\frac{1}{\sqrt{\lambda}}\big(\partial_1 V_3\partial_1 y_0 +\partial_2 V_3\partial_2 y_0\big).
\end{equation*}
\begin{equation*}
\begin{split} 
(\nabla \vec b_0)^t\nabla \vec b_0 = e^{2f}\nabla f\otimes\nabla f,\qquad
(\nabla V)^t\nabla\b = e^f\nabla V_3\otimes \nabla f.
\end{split}
\end{equation*}
Further, observe that:
$\partial_i \d=-(\partial_{1i}f\partial_1y_0+\partial_{2i}f\partial_2
y_0+\partial_1 f\partial_{1i}y_0+\partial_2f\partial_{2i}y_0 )$, and so:
$$ \frac{1}{\lambda}\inn{\partial_1y_0}{\partial_1\d}
= -\frac{1}{\lambda}\big(\lambda\partial_{11}f
+\frac{1}{2}\partial_1\lambda\partial_1
f+\frac{1}{2}\partial_2\lambda\partial_2 f\big) 
= -(\partial_{11}f+|\nabla f|^2). $$
In the same manner, we arrive at:
\begin{equation*}
\frac{1}{\lambda}\inn{\partial_2y_0}{\partial_2\d} =
-(\partial_{22}f+|\nabla f|^2), \quad 
\frac{1}{\lambda}\inn{\partial_2y_0}{\partial_1\d} = -\partial_{12}f, \quad
\frac{1}{\lambda}\inn{\partial_1y_0}{\partial_2\d} = -\partial_{21}f.
\end{equation*}
Consequently, $(\nabla y_0)^t\nabla\d$ is already a symmetric matrix, and:
$$ (\nabla y_0)^t\nabla\d=-e^{2f}(\nabla^2f+|\nabla f|^2\mathrm{Id}_2).$$
In parti\-cu\-lar, under condition $\Delta f=0$, the formula
(\ref{ricci}) yields:
\begin{equation*}
\sym{(\nabla y_0)^t\nabla\d}+(\nabla\b)^t\nabla \b=
e^{2f}\mathrm{Ric}(G)_{2\times 2},
\end{equation*}
which we directly see to be equivalent with $\nabla f=0$ and hence
with $\mathrm{Ric}(G) = 0$. This establishes $(ii)$.

We now compute the remaining quantities appearing in the expression of
$\mathcal{I}_4$. Firstly:
\begin{equation*}
\nabla\p= \frac{1}{2\lambda^{3/2}}\nabla y_0(\nabla V_3\otimes \nabla\lambda)
-\frac{1}{\sqrt{\lambda}}\nabla
y_0\nabla^2V_3-\frac{1}{\sqrt{\lambda}} \Big( \partial_1V_3
(\partial_{11}y_0, \partial_{12} y_0) + \partial_2V_3
(\partial_{12}y_0, \partial_{22} y_0)\Big).
\end{equation*}
Using the relations between $\inn{\partial_{ij}y_0}{\partial_k
  y_0}$ and $\partial_lG$ in (\ref{2part}), we obtain:
\begin{equation*}
(\nabla y_0)^t\nabla\p= \frac{1}{2\lambda^{3/2}}G_{2\times
  2}\nabla V_3\otimes \nabla
\lambda-\frac{1}{\sqrt{\lambda}}G_{2\times 2}\nabla^2 V_3 
-\frac{1}{2\sqrt{\lambda}}\left[\begin{array}{c|c}
\inn{\nabla V_3}{\nabla \lambda} & \inn{\nabla V_3}{\nabla\lambda^\perp} \\ \hline
-\inn{\nabla V_3}{\nabla\lambda^\perp} &
\inn{\nabla V_3}{\nabla \lambda}
\end{array}\right],
\end{equation*}
and therefore:
$$\sym (\nabla y_0)^t\nabla \vec p = \sqrt{\lambda}~
\mbox{sym} \big(\nabla V_3\otimes \nabla f\big) -
\sqrt{\lambda}\nabla^2V_3 - \sqrt{\lambda}\inn{\nabla V_3}{\nabla
  \lambda}\mathrm{Id}_2.$$  
In a similar manner, it follows that:
\begin{equation*}
\sym (\nabla y_0)^t\nabla\d = -\lambda\Big(
\nabla^2f +|\nabla f|^2\mathrm{Id}_2\Big).
\end{equation*}

Since $\mathcal{Q}_{2, A}(x') = \lambda^{-1}\mathcal{Q}_2$, the
formula in (\ref{limit_fun}) becomes:
\begin{equation}
\begin{split}
\mathcal{I}_4(V,\mathbb{S}) = & \frac{1}{2} \int_{\Omega} e^{-2f} \mathcal{Q}_2\big(
\mathbb{S}+\frac{1}{2}(\nabla V)^t\nabla V+\frac{1}{24}e^{2f}\nabla
f\otimes\nabla f\big) ~\mathrm{d}x'\\
& + \frac{1}{24}\int_{\Omega}e^{-2f}\mathcal{Q}_2\big(2e^f \nabla V_3
\otimes \nabla f-e^f\nabla^2 V_3-e^f\inn{\nabla V_3}{\nabla
  f}\mathrm{Id}_2\big) ~\mathrm{d}x'\\ 
& + \frac{1}{1440}\int_{\Omega}e^{-2f}\mathcal{Q}_2\big(
e^{2f}\mathrm{Ric}(G)_{2\times 2}\big) ~\mathrm{d}x',
\end{split}
\end{equation}
which implies the result.
\end{proof}

\section{Appendix: a proof of Corollary \ref{lemma_estimate2}}

{\bf 1.}  For every $x'\in\Omega$ denote $D_{x', \delta}=B(x',\delta)\cap \Omega$ and
$B_{x', \delta, h}=D_{x', \delta} \times (-h/2,h/2)$. For short, we write 
$B_{x',2h}= B_{x',2h,h}$ and $B_{x',h}= B_{x',h,h}$.
Apply Lemma \ref{Lemma_approx} to the set $\Vh=B_{x',2h}$ to get  a
rotation $\Rhx\in SO(3)$ such that, with a universal constant $C$:
\begin{equation}\label{app1}
\begin{split}
& \frac{1}{h}\int_{B_{x',2h}} \abs{ \nabu(z)- \Rhx\pare{Q_0(z')+z_3B_0(z')} }^2 \dz
\\ & \qquad\qquad\qquad\qquad\qquad\qquad\qquad\qquad
\leq C\pare{ E^h(u^h, B_{x',2h})+h^3\abs{B_{x',2h}} }.
\end{split}
\end{equation}
Consider a family of mollifiers $\Ex \in C^{\infty}(\Omega,
\mathbb{R})$, parametrized by $x'\in\Omega$:
\begin{equation*}
\int_{\Omega}\Ex =\frac{1}{h},\;\;\;
\norm{\Ex}_{L^\infty(\Omega)} \leq \frac{C}{h^3}, \;\;\;\
\norm{\nabla_{x'}\Ex}_{L^\infty(\Omega)} \leq \frac{C}{h^4} \quad
\mbox{and } (\supp{\Ex})\cap\Omega \subset \Dhx.
\end{equation*}
Define $\tilde{R}^h\in W^{1,2}(\Omega, \mathbb{R}^{3\times 3})$ as:
\begin{equation}
\tilde{R}^h(x')=\int_{\Omega^h} \Exz \nabla u^h(z) \pare{Q_0(z')+z_3B_0(z')}^{-1}\dz.
\end{equation}
We then have: 
\begin{equation}\label{est_ball}
\begin{split}
\frac{1}{h}\int_{\Bhx}
|\nabu(z) - & \tilde{R}^h(z')  \pare{Q_0(z')+z_3B_0(z')}|^2 \dz \\
 &\leq \frac{C}{h} \int_{B_{x',2h}} \abs{\nabu(z)-\Rhx\pare{Q_0(z')+z_3B_0(z')}}^2 \dz\\
&\quad + \frac{C}{h} \int_{\Bhx} |\tilde{R}^h(z')-\Rhx|^2 |Q_0(z')+z_3B_0(z')|^2 \dz \\
&\leq C\pare{E^h(u^h, B_{x',2h})+h^3\abs{B_{x',2h}}}+\frac{C}{h}\int_{\Bhx} |\tilde{R}^h(z')-\Rhx|^2 \dz,
\end{split}
\end{equation}
where we have used \eqref{app1} and $\norm{Q_0(z')+z_3B_0(z')}_{L^\infty}\leq C$.
Now, for every $z'\in \Bhx$ we have:
\begin{equation}\label{Rh}
\begin{split}
|&\tilde{R}^h(z') -\Rhx|^2  = \abs{  \int_{\Omega^h} \Eyz \nabu(y)\pare{Q_0(y')+y_3B_0(y')}^{-1}\dy-\Rhx   }^2\\
&=\abs{ \int_{\Omega^h}\Eyz\pare{\nabu(y)-\Rhx\pare{Q_0(y')+y_3B_0(y')}} \pare{Q_0(z')+y_3B_0(z')}^{-1}\dy }^2 \\
&\leq C\pare{\int_{\Bhz}\Eyz^2 \dy} \pare{ \int_{\Bhz} \abs{ \nabu(y)-\Rhx\pare{Q_0(y')+y_3B_0(y')}  }^2 \dy}\\
&\leq \frac{C}{h^2} \int_{B_{x',2h}} \abs{ \nabu(y)-\Rhx\pare{Q_0(y')+y_3B_0(y')}  }^2 \dy\\
&\leq \frac{C}{h^2} \pare{E^h(u^h, B_{x',2h})+h^3\abs{B_{x',2h}}}.
\end{split}
\end{equation}
In a similar way, in view of $\int_{\Omega^h}
\nabla_{z'}\eta_{z'}(y')~\mbox{d}y = 0$, it follows that:
\begin{equation*}
\begin{split}
|\nabla \tilde{R}^h(z')|^2
&= \pare{ \int_{\Omega^h} \nabla_{z'}\Eyz\nabla u^h(y) \pare{Q_0(y')+y_3B_0(y')}^{-1}\dy}^2 \\
&= \pare{ \int_{B_{x', 2h}} \nabla_{z'}\Eyz \pare{\nabla u^h(y) \pare{Q_0(y')+y_3B_0(y')}^{-1}-\Rhx}\dy}^2 \\
&\leq C \int_{\Omega^h}\abs{\nabla_{z'}\Eyz}^2\dy 
\int_{B_{x', 2h}} \abs{ \nabu(y)-\Rhx\pare{Q_0(y')+y_3B_0(y')}  }^2 \dy \\
&\leq \frac{C}{h^4}  \pare{E^h(u^h,B_{x',2h})+h^3\abs{B_{x',2h}}}.
\end{split}
\end{equation*}
From \eqref{Rh} we obtain:
\begin{equation*}
\begin{split}
\int_{\Bhx} |\tilde{R}^h(z')-\Rhx|^2 \dz
&\leq \frac{C}{h^2} \int_{\Bhx}\pare{ E^h(u^h,B_{x',2h})+h^4|B_{x',2h}|} \dz \\
&\leq Ch\pare{ E^h(u^h,B_{x',2h})+h^3|B_{x',2h}|},
\end{split}
\end{equation*}
and therefore by \eqref{est_ball} we further see that:
\begin{equation}\label{est_final}
\begin{split}
\frac{1}{h}\int_{\Bhx}
|\nabu(z)-&\tilde{R}^h(z')\pare{Q_0(z')+z_3B_0(z')}|^2 \dz \\ &\qquad\qquad\qquad
\leq  C\pare{ E^h(u^h, B_{x',2h})+h^3|B_{x',2h}|}.
\end{split}
\end{equation}

\medskip

{\bf 2.} Covering $\Omega^h$ by a finite family of sets  $\{\Bhx\}$,
such that  the intersection number of the doubled covering $\{B_{x',
  2h}\}$ is independent of $h$, applying \eqref{est_final}
and summing over the covering, it follows that:
\begin{equation*}
\frac{1}{h}\int_{\Omega^h}
|\nabu(z)-\tilde{R}^h(z')\pare{Q_0(z')+z_3B_0(z')}|^2 \dz
\leq  C\pare{ E^h(u^h)+h^4}.
\end{equation*} 

In a similar fashion we obtain:
\begin{equation*}
\begin{split}
\int_{\Dhx} |\nabla \tilde{R}^h(z')|^2\dz
&\leq \frac{C}{h^4} \int_{\Dhx}  \pare{E^h(u^h,B_{x',2h})+h^3|B_{x',2h}|} \dz \\
&\leq \frac{C}{h^2}  \pare{E^h(u^h,B_{x',2h})+h^3|B_{x',2h}|},
\end{split}
\end{equation*}
and by the same covering argument:
\begin{equation*}
\int_{\Omega^h} |\nabla \tilde{R}^h(z')|^2\dz
\leq \frac{C}{h^2}  \pare{E^h(u^h)+h^4}.
\end{equation*}

\medskip

{\bf 3.} Note that, in the above two estimates, we can replace $\tilde{R}^h$ by
$R^h=\proj{\tilde{R}^h} \in W^{1,2}(\Omega, SO(3))$. Firstly, the
projection in question is well defined in view of \eqref{Rh}, since:
\begin{equation*}
\dist^2 \pare{\tilde{R}^h, SO(3)} 
\leq |\tilde{R}^h-\Rhx|
\leq \frac{C}{h^2} \pare{ E^h(u^h)+h^4 },
\end{equation*}
which is small because of the hypothesis $\alpha<2$. Moreover:
\begin{equation*}
\begin{split}
\frac{1}{h}\int_{\Bhx}  |\nabla
  u^h(z)-R^h(z') & \pare{Q_0(z')+z_3B_0(z') }|^2\dz \\ 
\leq & ~\frac{C}{h}\int_{\Bhx} \abs{ \nabla u^h(z)-\tilde{R}^h(z')\pare{Q_0(z')+z_3B_0(z') }  }^2\dz \\
& + \frac{C}{h}\int_{\Bhx} |\tilde{R}^h(z')-R^h(z')|^2|Q_0(z')+z_3B_0(z') |^2\dz \\
\leq & ~ C\pare{ E^h(u^h,B_{x',2h})+h^3|B_{x', 2h}| }
\end{split}
\end{equation*}
because of \eqref{est_final} and \eqref{Rh}. Finally, the previous
covering argument clearly implies \eqref{estimate1}, and $\int_{\Omega}
|\nabla R^h|^2\dz \leq C \int_{\Omega} |\nabla \tilde{R}^h|^2\dz$
yields \eqref{estimate2}.

\end{document}